\newtheorem{theorem}{Theorem}[section] %
\newtheorem{remark}{Remark}[section]
\newtheorem{example}{Example}[section]
\newtheorem{lemma}{Lemma}[section]
\newtheorem{assumption}{Assumption}[section] %
\def\3bar{{|\hspace{-.02in}|\hspace{-.02in}|}}
\numberwithin{equation}{section}
\journal{Journal of Computational and Applied Mathematics}
\begin{document}

\begin{frontmatter}

\title{Convergence analysis of a weak Galerkin finite element method on a Shishkin mesh for a singularly perturbed fourth-order problem in 2D}

\author[mymainaddress]{Shicheng Liu}
\ead{lsc22@mails.jlu.edu.cn}

\author[mysecondaddress]{Xiangyun Meng}
\ead{xymeng1@bjtu.edu.cn}

\author[mymainaddress]{Qilong Zhai\corref{mycorrespondingauthor}}
\cortext[mycorrespondingauthor]{Corresponding author}
\ead{zhaiql@jlu.edu.cn}

\address[mymainaddress]{School of Mathematics, Jilin University, Changchun {130012}, China}
\address[mysecondaddress]{School of Mathematics and Statistics, Beijing Jiaotong University, Beijing {100044}, China} 

\begin{abstract}
	In this paper, we apply the weak Galerkin (WG) finite element method to solve the singularly perturbed fourth-order boundary value problem in 2D domain. A Shishkin mesh is used to ensure that the method exhibits uniform convergence, regardless of the singular perturbation parameter. Asymptotically optimal order error estimate in a $H^2$ discrete norm is established for the corresponding WG solutions. Numerical tests are presented to verify the theory of convergence.
\end{abstract}

%

\begin{keyword}
 


Weak Galerkin finite element method, fourth-order differential equation,  singularly perturbed, Shishkin mesh.

\MSC[2020] 65N15 \sep 65N30 \sep 35B25

\end{keyword}

\end{frontmatter}


\section{Introduction}
\label{section:introduction}
This paper is concerned with a singularly perturbed fourth-order boundary value problem in a square region $\Omega$. The problem is solved by the weak Galerkin (WG) method with a Shishkin mesh, as follows:
\begin{align}
  \varepsilon ^{2}\Delta ^{2}u-\Delta u&=f, \quad \text{in}~\Omega,\label{1.1}\\
  u&=0, \quad \text{on}~\partial\Omega,\label{1.2}\\
  \frac{\partial u}{\partial \mathbf{n}}&=0, \quad \text{on}~\partial\Omega,\label{1.3}
\end{align}
with a positive parameter $\varepsilon$ satisfying $0< \varepsilon \ll 1$ and $f \in L^2(\Omega)$. This problem is used for thin elastic plates clamped in tension. $f$ represents the transverse load, $\varepsilon$ symbolizes the ratio of bending stiffness to tensile stiffness of the plate, and the function's solution, denoted as $u$, represents the displacement of the plate. This problem emerges in the investigation of the linearization of the fourth-order perturbation associated with the fully nonlinear Monge-Ampère equation\cite{MR2813346,Monge_mpere}.

A variational formulation for the fourth-order equation (\ref{1.1}) with the boundary conditions (\ref{1.2}) and (\ref{1.3}) seeks $u\in H^2(\Omega)$ such that
\begin{equation}\label{1.4}
  \varepsilon^{2}(\Delta u,\Delta v)+(\nabla u,\nabla v)=(f,v),\quad\forall v\in H^{2}_{0}(\Omega),
\end{equation}
where $H^2_0(\Omega)$ is a subspace of the Sobolev space $H^2(\Omega)$ consisting of functions with vanishing value and normal derivative on $\partial \Omega$.

Research into numerical methods for singularly perturbed differential equations commenced in the early 1970s, with the frontier of research continuously expanding ever since. Bakhvalov made an important early contribution to the optimization of numerical methods by means of special meshes \cite{MR0255066} in 1969. In the early 1990s, G.I. Shishkin proposed piecewise-equidistant meshes \cite{MR1439750, Moscow}. These meshes, characterized by their very simple structure, are usually easy to analyze. Shishkin meshes for various problems and numerical methods have been studied since and they are still popular. Numerous scholars have explored the numerical analysis of analogous problems using various finite element methods, such as the mixed finite element method in \cite{mixed_FEM}, the hp finite element method in \cite{hpFEM1}, the continuous interior penalty finite element method in \cite{C_interior_penaltyFEM}, the upwind finite volume element method in \cite{FVEM_meng}, the conforming finite element method in \cite{CFEM}. And also some papers consider finite element methods on quasi-uniform meshes: a continuous interior penalty finite element method in \cite{Monge_mpere} and nonconforming finite element method in \cite{NCFEM2,NCFEM3,NCFEM1,NCFEM_meng,NCFEM4,NCFEM5,NCFEM7,NCFEM6}.

In this paper, we employ the WG method with a Shishkin mesh to investigate the convergence behavior of a fourth-order boundary value problem that exhibits singular perturbation. The WG method proves to be an effective numerical technique for the partial differential equations(PDEs). The initial proposal for its application in solving second-order elliptic problems was made by Junping Wang and Xiu Ye in \cite{wysec2}. The core concept involves establishing distinct basis functions for the interior and boundary of each partitioned element, and substituting the traditional differential operator with a discretized weak differential operator. The WG method has been applied to Stokes equations \cite{wy1302,WangZhaiZhangS2016,wangwangliu2022}, elasticity equations \cite{chenxie2016,Liu,lockingw,Yisongyang}, Maxwell's equations \cite{MLW14}, biharmonic equations \cite{MuWangYeZhang14,ZhangZhai15}, Navier-Stokes equations \cite{HMY2018,LLC18,zzlw2018}, Brinkman equations \cite{MuWangYe14,WangZhaiZhang2016,ZhaiZhangMu16},  the multigrid approach \cite{CWWY15}, the incompressible flow \cite{ZL18}, the maximum principle \cite{maximumwang2,WangYeZhaiZhang18}, the post-processing technique \cite{WangZhangZhangZhang18} and so on. For singular perturbed value problems, the WG method has also yielded some results, such as the singularly perturbed convection-diffusion problems for WG in 1D \cite{WG_1D_Bakhvalov,WG_1D_Shishkin} and 2D \cite{WG_2D_Shishkin}, the singularly perturbed biharmonic equation for WG in uniform mesh \cite{singularly_perturbed_biharmonic}.


This paper is organized as follows.
In Section 2, we introduce the Shishkin mesh and the assumptions associated. In Section 3, we give the definitions of the weak Laplacian operator and weak gradient operator. We also present WG finite element schemes for the singularly perturbed value problem. In Section 4, we introduce some local $L^2$ projection operators and give some approximation properties. In Section 5, we establish error estimates for the WG scheme in a $H^2$-equivalent discrete norm. And in Section 6, we report the results of two numerical experiments.

\section{Preliminaries and notations}


To solve problem (\ref{1.1})-(\ref{1.3}), we suppose the following assumption holds in \cite{NCFEM_meng}, which involves structuring the solution and decomposing $u$ into smooth and layered components.

\begin{assumption}\label{assume2.1}
  The solution $u$ to the singularly perturbed fourth-order boundary value problem (\ref{1.1})-(\ref{1.3}) can be expressed as the sum of its smooth and layered components, as follows:
  \begin{align*}
    u=S+\sum_{l=1}^{4}E_{l}+E_{12}+E_{23}+E_{34}+E_{41}.
  \end{align*}
  In this decomposition, $S$ represents a smooth function, each $E_{l}$ corresponds to a boundary layer component along the sides of $\bar{\Omega}$ in anti-clockwise order, and the remaining components are corner layer parts along the corners of $\bar{\Omega}$ in anti-clockwise order. Furthermore, there exists a constant $C$ for all points $(x, y) \in \bar{\Omega}$, which is independent of $x$ and $y$. This constant satisfies the following conditions for $0 \leq i+j \leq k+1$,
    \begin{align*}
        &\left\vert\frac{\partial^{i+j}S}{\partial x^{i}\partial y^{j}}\right\vert\leq C,\\
        &\left\vert\frac{\partial^{i+j}E_{1}}{\partial x^{i}\partial y^{j}}\right\vert\leq C\varepsilon^{1-j}e^{-\frac{y}{\varepsilon}},\\
        &\left\vert\frac{\partial^{i+j}E_{4}}{\partial x^{i}\partial y^{j}}\right\vert\leq C\varepsilon^{1-i}e^{-\frac{x}{\varepsilon}},\\
        &\left\vert\frac{\partial^{i+j}E_{41}}{\partial x^{i}\partial y^{j}}\right\vert\leq C\varepsilon^{1-i-j}e^{-\frac{x}{\varepsilon}}e^{-\frac{y}{\varepsilon}}.
    \end{align*}
    Moreover, the other components of the decomposition are bounded in a similar manner.
  \end{assumption}

  In order to solve the layer structure in the solution of problems (\ref{1.1})-(\ref{1.3}), a well-suited layer-adapted Shishkin mesh be considered. This mesh is refined in the layers. For a comprehensive discussion on the construction of Shishkin meshes, please refer to \cite{Robust}.

  Consider a positive integer $N \geq 4$ that is divisible by $4$. We introduce a mesh transition parameter $\lambda$ to determine the location at which the mesh switches from coarse to fine. This parameter is defined by
    \begin{equation}
        \lambda=min\left\{\alpha\varepsilon lnN,\frac{1}{4}\right\},
    \end{equation}
  where $\alpha$ is a positive constant, selected to be $k+1$ for the subsequent analysis. 

Create a piecewise equidistant mesh for the interval $[0, 1]$ by dividing it as follows: divide $[0, \lambda]$ into $N/4$ subintervals, $[\lambda, 1-\lambda]$ into $N/2$ subintervals, and $[1-\lambda, 1]$ into $N/4$ subintervals. The Shishkin mesh for the problem (\ref{1.1})-(\ref{1.3}) is formed by taking the tensor product of two such one-dimensional meshes, as illustrated in Figure \ref{figure_1}. The fine meshwidth denoted as $h$ and the coarse meshwidth denoted as $H$ in the Shishkin mesh represented by $\mathcal{T}_N$ exhibit the following characteristics:
  \begin{equation}\label{2.2}
      h=\frac{4\lambda}{N}\leq C\varepsilon N^{-1}lnN \quad\text{and}\quad H=2\frac{1-2\lambda}{N}\leq CN^{-1}
  \end{equation}
for some constant $C$. The domain $\Omega$ is divided into some subdomains, see Figure \ref{figure_1}. Denote $\Omega_{l1} \cup \Omega_{s} \cup \Omega_{l3}$ by $\Omega_{r}^{1}$ and $\Omega_{l2} \cup \Omega_{s} \cup \Omega_{l4}$ by $\Omega_{r}^{2}$.
\begin{figure}[h]
  \centering
  \includegraphics[width=12cm]{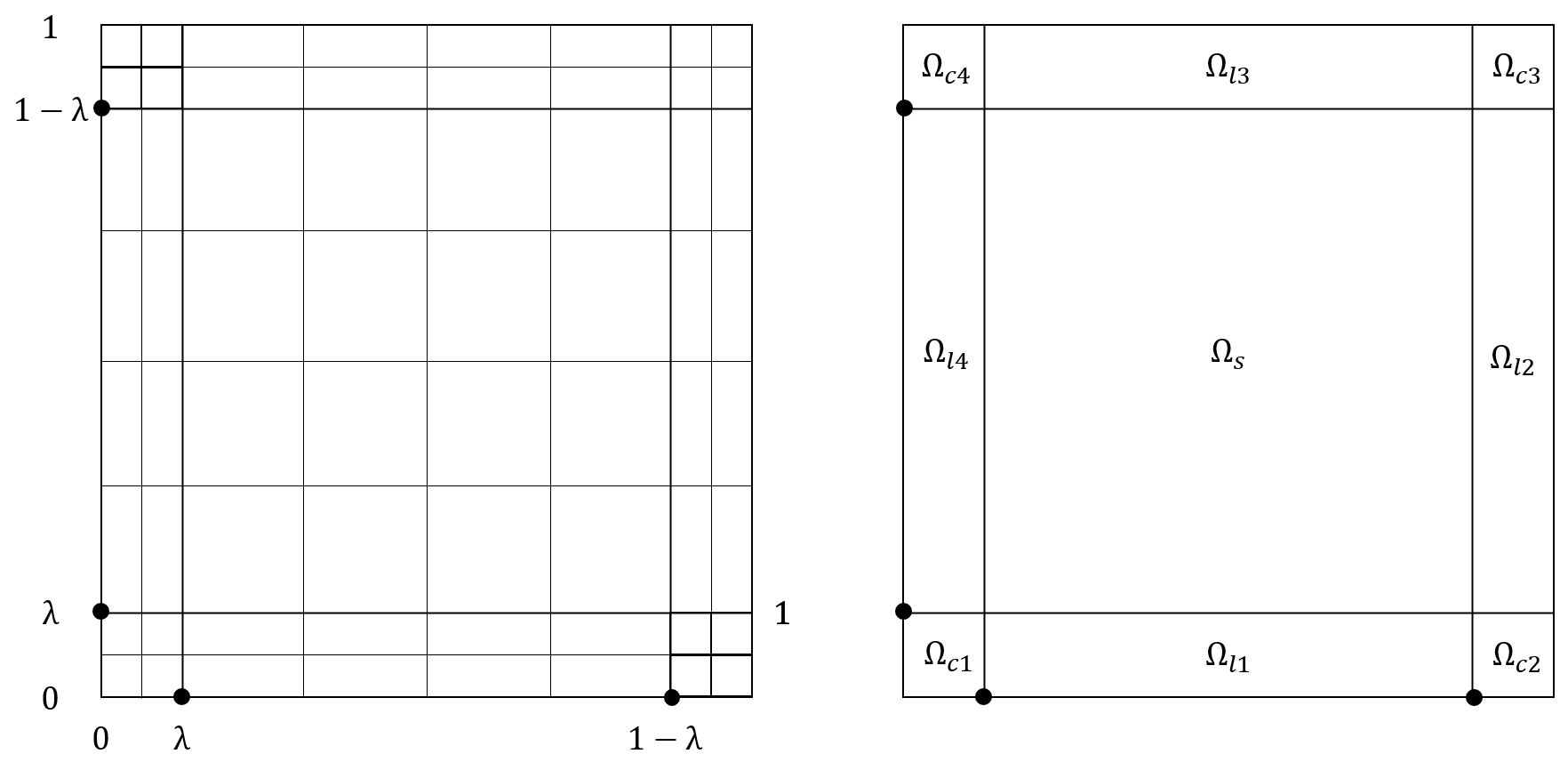}
      \caption{A rectangular Shishkin mesh with $N = 8$ and dissection of $\Omega$.}
      \label{figure_1}
\end{figure}

\section{Weak differential operator and WG scheme}
To propose the weak Galer-kin method, we introduce some key concepts. Consider a element $T$ belonging to the partition $\mathcal{T}_{N}$ with a boundary denoted by $\partial T$. Let the set of all edges in $\mathcal{T}_N$ be represented as $\mathcal{E}_N$. We define a weak function $v = \{v_0, v_b, \mathbf{v}_g\}$ on the element $T$, where $v_0 \in L^2(T)$, $v_b \in H^\frac{1}{2}(\partial T)$, and $\mathbf{v}_g \cdot \mathbf{n} \in H^\frac{1}{2}(\partial T)$, with $\mathbf{n}$ representing the outward normal direction on $\partial T$. Furthermore, the first and second components, $v_0$ and $v_b$, correspond to values of $v$ in the interior and on the boundary of $T$. The third component $\mathbf{v}_g$ is employed to approximate the gradient of $v$ along the boundary of $T$. It's important to note that each edge $e \in \mathcal{E}_N$ has a unique value for $v_b$ and $\mathbf{v}_g$. Additionally, it's worth mentioning that $v_b$ and $\mathbf{v}_g$ may not necessarily be associated with the trace of $v_0$ and $\nabla v_0$ on $\partial T$.

For any integer $k \geq 3$, we establish a local discrete weak function space for any element $T \in \mathcal{T}_N$ denoted as $W_k(T)$:
\begin{equation*}
    W_{k}(T)=\left\{v=\{v_0, v_b, \mathbf{v}_g\}: v_0\in \mathbb{Q}_{k}(T), v_b\in \mathbb{P}_{k}(e), \mathbf{v}_g\in [\mathbb{P}_{k}(e)]^{2}, e \in \partial T\right\},
\end{equation*}
where $e$ is the edge of $\partial T$, and $\mathbb{Q}_{k}$ is the space of polynomials which are of degree not exceeding $k$ with respect to each one of the variables $x$ and $y$. By extending $W_k(T)$ to encompass all element $T \in \mathcal{T}_N$, we introduce the definition of a weak Galerkin space:
$$V_N=\left\{v=\{v_0, v_b, \mathbf{v}_g\}: v|_T\in W_k(T), \forall T\in\mathcal{T}_{N}\right\}.$$
Let $V_{N}^{0}$ represent the subspace of $V_h$ where the traces vanish:
$$V_{N}^{0}=\left\{v=\{v_0, v_b, \mathbf{v}_g\}\in V_h, v_b|_e=0, \mathbf{v}_g\cdot\mathbf{n}|_e=0, e\subset\partial T\cap \partial\Omega\right\}.$$

For any $v = \{v_0, v_b, \mathbf{v}_g\}$ and a fixed integer $k \geq 3$, we define a discrete weak Laplacian operator $\Delta_{w, k}$ as a unique polynomial $\Delta_{w, k} v \in \mathbb{Q}_{k}(T)$ on $T$ satisfying the following equation:
\begin{align}\label{3.1}
  \left(\Delta_{w,k}v,\varphi\right)_T=\left(v_0, \Delta\varphi\right)_T-\left\langle v_b, \nabla\varphi\cdot\mathbf{n}\right\rangle+\left\langle \mathbf{v}_g\cdot\mathbf{n}, \varphi\right\rangle, \quad\forall\varphi\in \mathbb{Q}_k(T),
\end{align}
where $\mathbf{n}$ is the unit outward normal vector to $\partial T$. Likewise, a discrete weak gradient $\nabla_{w, k}$ is defined on $T$ as a unique polynomial $\nabla_{w, k} v \in [\mathbb{Q}_{k}(T)]^{2}$ satisfying:
\begin{align}\label{3.2}
  \left(\nabla_{w,k}v, \mathbf{q}\right)_T=-\left(v_0, \nabla\cdot \mathbf{q}\right)_T+\left\langle v_b, \mathbf{q}\cdot\mathbf{n}\right\rangle_{\partial T}, \quad\forall \mathbf{q}\in \left[\mathbb{Q}_k(T)\right]^{2}. 
\end{align} 

For simplicity, when there is no confusion, we drop the subscript $k$ in the notations $\Delta_{w,k}$ and $\nabla_{w,k}$ for the discrete weak Laplacian and the discrete weak gradient. Additionally, we introduce the following notations:
\begin{align*}
  \left(\Delta_{w}v, \Delta_{w}w\right)_{\mathcal{T}_{N}}:&=\sum_{T\in\mathcal{T}_N}\left(\Delta_{w}v, \Delta_{w}w\right)_T,\\
  \left(\nabla_{w}v, \nabla_{w}w\right)_{\mathcal{T}_{N}}:&=\sum_{T\in\mathcal{T}_N}\left(\nabla_{w}v, \nabla_{w}w\right)_T.
\end{align*}

Let us introduce a stabilizer, which is a bilinear form for any $u_N=\{u_0, u_b, \mathbf{u}_g\}$ and $v=\{v_0, v_b, \mathbf{v}_g\}$ in the space $V_h$. It is defined as follows:
\begin{align}
  \begin{aligned}
    s\left(u_h,v\right) = \sum_{T\in\mathcal{T}_N} \Big( \varepsilon^{2} h^{-1} & \left\langle\nabla u_0-\mathbf{u}_g,\nabla v_0-\mathbf{v}_g\right\rangle_{\partial T}\\
    &+\left(\varepsilon^{2}h^{-2}H^{-1} +H^{-1}\right) \left\langle u_0-u_b,v_0-v_b\right\rangle_{\partial T} \Big),
  \end{aligned}
\end{align}
where $h$ and $H$ is defined as (\ref{2.2}).

\begin{algorithm}[h!]
\caption{WG Algorithm}
To obtain a numerical approximation for (\ref{1.1})-(\ref{1.3}), we seek $u_N=\{u_0, u_b, \mathbf{u}_g\} \in V_{N}$ that satisfies the following equation:
\begin{align}\label{3.3}
  \varepsilon^{2}(\Delta_{w}u_{N},\Delta_{w}v)_{N}+(\nabla_{w}u_{N},\nabla_{w}v)_{N}+s(u_{N},v)=(f,v),
\end{align}
for any $v=\{v_0,v_b,\mathbf{v}_g\}\in V_N^0$. 
\end{algorithm}

The following lemma provides a valuable result regarding the finite element space $V^0_N$.
\begin{lemma}
  For any $v \in V^0_N$, let $\3bar v \3bar$ be given as follows:
  \begin{align}\label{3.4}
    \3bar v \3bar^2=\varepsilon^{2}(\Delta_{w}v,\Delta_{w}v)_{\mathcal{T}_{N}}+(\nabla_{w}v,\nabla_{w}v)_{\mathcal{T}_{N}}+s(v,v),
\end{align}
Then, $\3bar \cdot \3bar$ defines a norm in $V^0_N$.
\end{lemma}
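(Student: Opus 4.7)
The quantity $\3bar \cdot \3bar$ is manifestly a seminorm: non-negativity follows since each summand in (\ref{3.4}) is an $L^2$-type squared norm, homogeneity $\3bar c v\3bar=|c|\,\3bar v\3bar$ is immediate from linearity of $\Delta_w$, $\nabla_w$, and the jumps inside $s(\cdot,\cdot)$, and the triangle inequality follows by applying the triangle inequality to each $L^2$ contribution and then Minkowski's inequality in $\mathbb{R}^3$ to the square roots. So the content of the lemma is the definiteness statement: $\3bar v\3bar =0$ for $v\in V_N^0$ implies $v=0$.

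Suppose $\3bar v \3bar=0$. Vanishing of $s(v,v)$ forces, on every $T\in\mathcal{T}_N$,
\begin{equation*}
    v_0 = v_b\quad\text{and}\quad \nabla v_0 = \mathbf{v}_g \quad\text{on }\partial T,
\end{equation*}
and the middle term yields $\nabla_w v = 0$ on each $T$. Plugging this into definition (\ref{3.2}) and integrating by parts,
\begin{equation*}
    0=(\nabla_w v,\mathbf{q})_T = -(v_0,\nabla\cdot\mathbf{q})_T + \langle v_b,\mathbf{q}\cdot\mathbf{n}\rangle_{\partial T}
    = (\nabla v_0,\mathbf{q})_T + \langle v_b-v_0,\mathbf{q}\cdot\mathbf{n}\rangle_{\partial T}
\end{equation*}
for all $\mathbf{q}\in[\mathbb{Q}_k(T)]^2$. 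The boundary term drops thanks to $v_0=v_b$ on $\partial T$, so $(\nabla v_0,\mathbf{q})_T=0$ for every polynomial $\mathbf{q}\in[\mathbb{Q}_k(T)]^2$. Since $\nabla v_0\in[\mathbb{Q}_{k-1}(T)]^2\subset[\mathbb{Q}_k(T)]^2$, the choice $\mathbf{q}=\nabla v_0$ forces $\nabla v_0\equiv 0$ on $T$, so $v_0$ is a constant on each element.

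To promote this to $v_0\equiv 0$ globally, observe that on each interior edge $e=\partial T_1\cap\partial T_2$, the trace condition $v_0=v_b$ on both sides together with the single-valuedness of $v_b$ shows that the constant value of $v_0$ agrees on the two adjacent elements. Walking from element to element along this connectivity reaches the boundary, where $v_b|_{e}=0$ for $e\subset\partial\Omega$ (membership in $V_N^0$), hence the common constant is $0$ and $v_0\equiv 0$ on $\Omega$. Finally, $v_b = v_0 = 0$ on every $\partial T$, and $\mathbf{v}_g = \nabla v_0 = 0$ on every $\partial T$, so $v=\{0,0,\mathbf{0}\}$ as claimed. The only non-routine step is extracting element-wise constancy from $\nabla_w v=0$, and this is precisely where the stabilizer is essential: without $v_0=v_b$ on $\partial T$ the boundary term in the integration by parts would not vanish, and one could not conclude $\nabla v_0=0$ pointwise.
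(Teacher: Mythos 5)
Your proof is correct, but it reaches the conclusion by a genuinely different route from the paper. The paper exploits the vanishing of the weak Laplacian term: from $\Delta_{w}v=0$ and the stabilizer conditions it deduces $\Delta v_0=0$ on each element, then argues that the matching conditions $v_0=v_b$ and $\nabla v_0=\mathbf{v}_g$ glue the pieces into a globally harmonic function with homogeneous boundary data, whence $v\equiv 0$. You instead exploit the vanishing of the weak gradient term: $\nabla_{w}v=0$ together with $v_0=v_b$ on $\partial T$ gives $(\nabla v_0,\mathbf{q})_T=0$ for all $\mathbf{q}\in[\mathbb{Q}_k(T)]^2$, hence $\nabla v_0\equiv 0$ element-wise, and a discrete connectivity argument through the single-valued $v_b$ propagates the zero boundary value inward. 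Your version is more elementary — it avoids invoking uniqueness for the global harmonic extension (a step the paper states rather loosely, since one must first justify that the piecewise polynomial is globally $C^1$ and distributionally harmonic) — and it shows that positivity already follows from the $(\nabla_w\cdot,\nabla_w\cdot)$ term and the stabilizer alone, without the $\varepsilon^{2}$-weighted Laplacian contribution; the paper's route uses the Laplacian term but is the one that generalizes to formulations lacking the gradient term. One trivial slip: $\nabla v_0$ need not lie in $[\mathbb{Q}_{k-1}(T)]^2$ (e.g.\ $\partial_x v_0$ can still have degree $k$ in $y$), but it does lie in $[\mathbb{Q}_{k}(T)]^2$, which is all your argument requires, so the choice $\mathbf{q}=\nabla v_0$ remains admissible.
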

\begin{proof}
  We shall only confirm the positivity property for $\3bar \cdot \3bar$. Consider $v = \{v_0, v_b, \mathbf{v}_g\} \in V^0_N$ with the assumption that $\3bar v \3bar = 0$. This implies $\Delta_{w}v = 0$ and $\nabla_{w}v = 0$ in each element $T$, while also satisfying $v_0 = v_b$ and $\nabla v_0 = \mathbf{v}_g$ on $\partial T$. Next, we will proof that $\Delta v_0 = 0$ in each element $T$. To this end, for any $\varphi \in \mathbb{Q}_k(T)$, employing the definition (\ref{3.1}) and the fact that $\Delta_{w}v = 0$, we obtain
  \begin{equation}\label{3.5}
    \begin{split}
      0 &= \left(\Delta_{w}v,\varphi\right)_T\\
        &= \left(v_0,\Delta\varphi\right)_T-\left\langle v_b,\nabla\varphi\cdot\mathbf{n}\right\rangle_{\partial T}+\left\langle\mathbf{v}_g\cdot\mathbf{n},\varphi\right\rangle_{\partial T}\\
        &= \left(\Delta v_0,\varphi\right)_T+\left\langle v_0-v_b,\nabla\varphi\cdot\mathbf{n}\right\rangle_{\partial T}+\left\langle\mathbf{v}_g\cdot\mathbf{n}-\nabla v_0\cdot\mathbf{n},\varphi\right\rangle_{\partial T}\\
        &= \left(\Delta v_0,\varphi\right)_T,
  \end{split}
  \end{equation}
  where we have applied the fact that $v_0 - v_b = 0$ and $\nabla v_0 - \mathbf{v}_g = 0$ in the final equality.
  The identity (\ref{3.5}) indicates that $\Delta v_0 = 0$ in each element $T$. Together with the conditions $v_0 = v_b$ and $\nabla v_0 = \mathbf{v}_g$ on $\partial T$, we conclude that $v$ is a globally smooth harmonic function on $\Omega$. Considering the boundary conditions $v_b = 0$ and $\mathbf{v}_g \cdot \mathbf{n} = 0$, we infer that the unique solution is $v \equiv 0$ on $\Omega$. This concludes the proof.
\end{proof}

\section{Local $L^2$ projection operators and approximation properties}
In this section, we introduce some projection operators for each element $T \in \mathcal{T}_N$. Consider the $L^2$ projection operator $\mathcal{Q}_0$, which projects onto $\mathbb{Q}_k(T)$. Additionally, for each edge $e \in \partial T$, we consider the $L^2$ projection operators $\mathcal{Q}_b$ and $\mathbf{Q}_g$ onto local polynomial spaces $\mathbb{P}_k(e)$ and $[\mathbb{P}_k(e)]^2$, respectively. We define a projection $\mathcal{Q}_N$ of $u$ into the finite element space $V_N$ such that on each
element $T$
\begin{align*}
  \mathcal{Q}_N u=\left\{\mathcal{Q}_0 u, \mathcal{Q}_b u, \mathbf{Q}_g(\nabla u)\right\}.
\end{align*}
Furthermore, let $\mathbf{Q}_{N}$ represent the local $L^2$ projection onto $[\mathbb{Q}_{k}(T)]^2$. The following lemma demonstrates that the weak Laplacian $\Delta_w$ is the polynomial projection of the classical Laplacian $\Delta$. 
\begin{lemma} \label{lemma4.1}
  On each element $T \in \mathcal{T}_{N}$, for any $v \in H^2(T)$,
  \begin{align}\label{4.1}
    \Delta_{w}(\mathcal{Q}_{N} v)=\mathcal{Q}_0(\Delta v).
  \end{align}
\end{lemma}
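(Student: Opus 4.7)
The plan is to verify the identity by testing both sides against an arbitrary $\varphi \in \mathbb{Q}_k(T)$ and using the definition of $\Delta_w$ together with two rounds of integration by parts. Since both $\Delta_w(\mathcal{Q}_N v)$ and $\mathcal{Q}_0(\Delta v)$ lie in $\mathbb{Q}_k(T)$, it suffices to show they produce the same $L^2(T)$-inner product against every test polynomial in $\mathbb{Q}_k(T)$.

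First I would apply the definition (\ref{3.1}) with the weak function $\mathcal{Q}_N v = \{\mathcal{Q}_0 v,\,\mathcal{Q}_b v,\,\mathbf{Q}_g(\nabla v)\}$ to write
\begin{equation*}
  (\Delta_w(\mathcal{Q}_N v),\varphi)_T = (\mathcal{Q}_0 v,\Delta\varphi)_T - \langle \mathcal{Q}_b v,\nabla\varphi\cdot\mathbf{n}\rangle_{\partial T} + \langle \mathbf{Q}_g(\nabla v)\cdot\mathbf{n},\varphi\rangle_{\partial T}.
\end{equation*}
Next I would remove each projection using the definition of $\mathcal{Q}_0$, $\mathcal{Q}_b$, and $\mathbf{Q}_g$: since $\Delta\varphi \in \mathbb{Q}_k(T)$ and, on each edge $e\subset\partial T$, the polynomials $\nabla\varphi\cdot\mathbf{n}|_e$ and $\varphi|_e$ belong to $\mathbb{P}_k(e)$, the $L^2$-orthogonality of these projections replaces $\mathcal{Q}_0 v, \mathcal{Q}_b v, \mathbf{Q}_g(\nabla v)$ by $v, v, \nabla v$ inside the respective integrals.

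Then I would integrate by parts twice on $T$: the identity
\begin{equation*}
  (v,\Delta\varphi)_T = -(\nabla v,\nabla\varphi)_T + \langle v,\nabla\varphi\cdot\mathbf{n}\rangle_{\partial T}
\end{equation*}
cancels the boundary term $\langle v,\nabla\varphi\cdot\mathbf{n}\rangle_{\partial T}$, and a further integration by parts turns $-(\nabla v,\nabla\varphi)_T + \langle \nabla v\cdot\mathbf{n},\varphi\rangle_{\partial T}$ into $(\Delta v,\varphi)_T$. Finally, since $\varphi \in \mathbb{Q}_k(T)$, the $L^2$-projection property of $\mathcal{Q}_0$ gives $(\Delta v,\varphi)_T = (\mathcal{Q}_0(\Delta v),\varphi)_T$, and the arbitrariness of $\varphi$ yields the claimed polynomial identity.

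There is no real obstacle here; the only subtle point is the polynomial-degree bookkeeping needed to justify dropping the three projections, namely checking that $\Delta\varphi \in \mathbb{Q}_k(T)$ and that both $\nabla\varphi\cdot\mathbf{n}|_e$ and $\varphi|_e$ lie in $\mathbb{P}_k(e)$, which is immediate once one notes that edges of $T$ are aligned with the coordinate axes (so traces of $\mathbb{Q}_k$ functions reduce to univariate polynomials of degree at most $k$). Everything else is routine integration by parts.
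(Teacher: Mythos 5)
Your proposal is correct and follows essentially the same route as the paper's proof: apply the definition of $\Delta_w$ to $\mathcal{Q}_N v$, strip the three projections using $L^2$-orthogonality against test functions whose relevant traces lie in the projection target spaces, integrate by parts twice to recover $(\Delta v,\varphi)_T$, and reinsert $\mathcal{Q}_0$. Your explicit check that $\Delta\varphi\in\mathbb{Q}_k(T)$ and that $\varphi|_e$, $\nabla\varphi\cdot\mathbf{n}|_e\in\mathbb{P}_k(e)$ on the axis-aligned edges is the one detail the paper leaves implicit.
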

\begin{proof}
  For any $\tau \in \mathbb{Q}_{k}(T)$, we obtain that
  \begin{align*}
    \left(\Delta_{w}\mathcal{Q}_{N} u,\tau\right)_{T}&=\left(\mathcal{Q}_{0} u,\Delta \tau \right)_{T}-\left\langle\mathcal{Q}_{b}u,\nabla\tau\cdot\mathbf{n}\right\rangle_{\partial T}+\left\langle\mathbf{Q}_g(\nabla u)\cdot\mathbf{n},\tau\right\rangle_{\partial T}\\
    &=\left(u,\Delta\tau\right)_{T}-\left\langle u,\nabla\tau\cdot\mathbf{n}\right\rangle_{\partial T}+\left\langle\nabla u \cdot\mathbf{n},\tau\right\rangle_{\partial T}\\
    &=\left(\Delta u,\tau\right)_{T}\\
    &=\left(\mathcal{Q}_{0}\Delta u,\tau\right)_{T}.
  \end{align*}
  This concludes the proof.
\end{proof}

A similar lemma holds for the weak gradient $\nabla_{w}$, as indicated in the subsequent lemma.
\begin{lemma} \label{lemma4.2}
  On each element $T \in \mathcal{T}_{N}$, for any $v \in H^2(T)$,
  \begin{align}\label{4.2}
    \nabla_{w}(\mathcal{Q}_{N} u)=\mathbf{Q}_{N}(\nabla u).
  \end{align}
\end{lemma}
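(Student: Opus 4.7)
The plan is to mirror the argument used for Lemma \ref{lemma4.1}, but applied to the definition (\ref{3.2}) of the weak gradient rather than (\ref{3.1}) of the weak Laplacian. Since $\nabla_w(\mathcal{Q}_N u)$ lies in $[\mathbb{Q}_k(T)]^2$ by construction, and $\mathbf{Q}_N(\nabla u)$ is the $L^2$ projection of $\nabla u$ onto the same space, it suffices to test against an arbitrary $\mathbf{q}\in[\mathbb{Q}_k(T)]^2$ and show that $(\nabla_w \mathcal{Q}_N u,\mathbf{q})_T = (\nabla u,\mathbf{q})_T$.

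First I would expand the left-hand side using (\ref{3.2}) with $v=\mathcal{Q}_N u$, giving
\[
(\nabla_w \mathcal{Q}_N u,\mathbf{q})_T = -(\mathcal{Q}_0 u,\nabla\cdot \mathbf{q})_T + \langle \mathcal{Q}_b u,\mathbf{q}\cdot\mathbf{n}\rangle_{\partial T}.
\]
The critical polynomial-compatibility check is that (i) $\nabla\cdot\mathbf{q}\in \mathbb{Q}_k(T)$, which holds because each partial derivative of a function in $\mathbb{Q}_k(T)$ stays in $\mathbb{Q}_k(T)$, and (ii) for every edge $e\subset\partial T$, the restriction $\mathbf{q}\cdot\mathbf{n}|_e$ lies in $\mathbb{P}_k(e)$, since the normal on a Cartesian edge is axis-aligned and the tensor-product polynomial restricts to a univariate polynomial of degree at most $k$. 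These two facts let me invoke the defining orthogonality of $\mathcal{Q}_0$ and $\mathcal{Q}_b$ to strip the projections:
\[
(\nabla_w \mathcal{Q}_N u,\mathbf{q})_T = -(u,\nabla\cdot \mathbf{q})_T + \langle u,\mathbf{q}\cdot\mathbf{n}\rangle_{\partial T}.
\]

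Next I would apply the classical integration-by-parts identity $(u,\nabla\cdot\mathbf{q})_T = -(\nabla u,\mathbf{q})_T + \langle u,\mathbf{q}\cdot\mathbf{n}\rangle_{\partial T}$ to rewrite the right-hand side as $(\nabla u,\mathbf{q})_T$. Finally, the defining property of $\mathbf{Q}_N$ (as the $L^2$ projection onto $[\mathbb{Q}_k(T)]^2$, tested against $\mathbf{q}\in[\mathbb{Q}_k(T)]^2$) gives $(\nabla u,\mathbf{q})_T = (\mathbf{Q}_N(\nabla u),\mathbf{q})_T$. Since $\mathbf{q}$ is arbitrary in $[\mathbb{Q}_k(T)]^2$ and both $\nabla_w \mathcal{Q}_N u$ and $\mathbf{Q}_N(\nabla u)$ live in that space, the identity (\ref{4.2}) follows.

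I do not expect any real obstacle: the argument is a one-step integration by parts sandwiched between two projection removals. The only point that requires a moment's care is confirming the polynomial degrees in (i) and (ii) above so that $\mathcal{Q}_0$ and $\mathcal{Q}_b$ act as the identity on the respective test quantities; once that is noted, the rest is mechanical.
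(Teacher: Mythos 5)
Your proposal is correct and follows exactly the same route as the paper's proof: expand $\nabla_w(\mathcal{Q}_N u)$ via the definition (\ref{3.2}), remove the projections $\mathcal{Q}_0$ and $\mathcal{Q}_b$ by orthogonality, integrate by parts to obtain $(\nabla u,\mathbf{q})_T$, and identify this with $(\mathbf{Q}_N(\nabla u),\mathbf{q})_T$. The only difference is that you explicitly verify the polynomial-compatibility of $\nabla\cdot\mathbf{q}$ and $\mathbf{q}\cdot\mathbf{n}|_e$ with the projection spaces, which the paper leaves implicit; this is a correct and harmless addition.
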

\begin{proof}
  For any $\mathbf{q} \in [\mathbb{Q}_{k}(T)]^2$, from the weak gradient definition and integration by parts, we get
  \begin{align*}
    \left(\nabla_{w}\mathcal{Q}_{N} u,\mathbf{q}\right)_{T}&=-\left(\mathcal{Q}_{0}u,\nabla\cdot \mathbf{q}\right)_{T}+\left\langle\mathcal{Q}_{b} u,\mathbf{q}\cdot\mathbf{n}\right\rangle_{\partial T}\\
    &=-\left(u,\nabla\cdot \mathbf{q}\right)_{T}+\left\langle u, \mathbf{q}\cdot\mathbf{n}\right\rangle_{\partial T}\\
    &=\left(\nabla u, \mathbf{q}\right)_{T}\\
    &=\left(\mathbf{Q}_{N}\nabla u, \mathbf{q}\right)_{T}.
  \end{align*}
  This corresponds to identity (\ref{4.2}).
\end{proof}

Now, we introduce notation that will be employed in the following lemmas. Consider any element $T$ in the partition $\mathcal{T}_{N}$. We define $\partial T_{1}$ and $ \partial T_{2}$ as the sets of element edges that are parallel to the $x$ and $y$ axes, respectively. The following lemmas are employed in the convergence analysis, and readers are directed to \cite[chapter 3.1]{oxford} for a detailed proof process.


\begin{lemma}\label{lemma4.3}
  Consider $\phi \in H^{k+1}(T)$ with $k \geq 3$. Let $\mathcal{Q}_{0} \phi$ denote the $L^2$-projection of $\phi$ onto $\mathbb{Q}_{k}(T)$. Then the following inequality estimate holds,
  \begin{align}
    &\Vert \phi-\mathcal{Q}_{0} \phi\Vert_{\partial T_{i}}\leq C\left(h_{j}^{k+\frac{1}{2}} \Vert \partial_{j}^{k+1}\phi\Vert_{T}+h_{i}^{k+1}h_{j}^{-\frac{1}{2}} \Vert\partial_{i}^{k+1}\phi\Vert_{T}+h_{j}^{\frac{1}{2}}h_{i}^{k} \Vert\partial_{i}^{k}\partial_{j}\phi\Vert_{T}\right), \label{I1} \\
    &\Vert\partial_{i}(\phi-\mathcal{Q}_{0} \phi)\Vert_{\partial T_{i}}\leq C\left(h_{i}^{k}h_{j}^{-\frac{1}{2}} \Vert\partial_{i}^{k+1}\phi\Vert_{T} +h_{i}^{k-1}h_{j}^{\frac{1}{2}}\Vert\partial_{i}^{k}\partial_{j}\phi\Vert_{T} +h_{j}^{k-\frac{1}{2}}\Vert \partial_{j}^{k}\partial_{i}\phi\Vert_{T}\right), \label{I2} \\
    &\Vert\partial_{j}(\phi-\mathcal{Q}_{0} \phi)\Vert_{\partial T_{i}}\leq C\left(h_{j}^{k-\frac{1}{2}}\Vert \partial_{j}^{k+1}\phi\Vert_{T} +h_{i}^{k}h_{j}^{-\frac{1}{2}}\Vert\partial_{i}^{k}\partial_{j}\phi\Vert_{T}\right), \label{I3} 
  \end{align}
  where $i,j\in\{1,2\}$, and $i\neq j$.
\end{lemma}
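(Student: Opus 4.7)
The plan is to exploit the tensor-product structure of $\mathbb{Q}_k(T)$ on the rectangle $T = I_1 \times I_2$. Writing $\mathcal{Q}_0 = \pi_1\pi_2 = \pi_2\pi_1$, where $\pi_r$ is the one-dimensional $L^2$-projection onto $\mathbb{P}_k$ in the $x_r$-direction, the two 1D projections are orthogonal (hence $L^2$-stable with norm $1$), they commute with each other, and $\pi_r$ commutes with $\partial_s$ whenever $s\neq r$. I will combine this structure with two standard tools: the one-dimensional Bramble--Hilbert estimates
\begin{equation*}
\|(I-\pi_r)\psi\|_T \leq C h_r^s \|\partial_r^s \psi\|_T,\qquad \|\partial_r(I-\pi_r)\psi\|_T \leq C h_r^{s-1}\|\partial_r^s \psi\|_T,
\end{equation*}
valid for $0\leq s\leq k+1$ (the second with $s\geq 1$), and the anisotropic trace inequality
\begin{equation*}
\|g\|_{\partial T_i}^2 \leq C\bigl(h_j^{-1}\|g\|_T^2 + h_j\|\partial_j g\|_T^2\bigr),\qquad i\neq j,
\end{equation*}
which is available because edges in $\partial T_i$ are transverse to the $x_j$-direction.

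By symmetry take $i=1$, $j=2$, and decompose $\phi - \mathcal{Q}_0\phi = (I-\pi_2)\phi + \pi_2(I-\pi_1)\phi$. For (\ref{I1}), apply the trace inequality to each summand. The first summand contributes $Ch_2^{k+1/2}\|\partial_2^{k+1}\phi\|_T$ after combining the Bramble--Hilbert bounds at $s=k+1$ for both $\|(I-\pi_2)\phi\|_T$ and $\|\partial_2(I-\pi_2)\phi\|_T$. For the second summand, $L^2$-stability of $\pi_2$ controls the $h_2^{-1/2}$-weighted part by $Ch_1^{k+1}h_2^{-1/2}\|\partial_1^{k+1}\phi\|_T$, producing the second stated term. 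For the $h_2^{1/2}$-weighted part $\|\partial_2\pi_2(I-\pi_1)\phi\|_T$, write $\partial_2\pi_2 = \partial_2 - \partial_2(I-\pi_2)$ and use that $\pi_1$ commutes with $\partial_2$. The leading piece $\|(I-\pi_1)\partial_2\phi\|_T$, bounded by Bramble--Hilbert at $s=k$ as $Ch_1^k\|\partial_1^k\partial_2\phi\|_T$, yields the third term; the residual $\|(I-\pi_1)\partial_2(I-\pi_2)\phi\|_T$ is controlled by $\|\partial_2(I-\pi_2)\phi\|_T \leq Ch_2^k\|\partial_2^{k+1}\phi\|_T$ via $L^2$-stability of $I-\pi_1$, and after the $h_2^{1/2}$-weight is absorbed into the first term.

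Bounds (\ref{I2}) and (\ref{I3}) follow by the same route after applying $\partial_1$ (resp.\ $\partial_2$) to the decomposition. Since $\partial_1$ commutes with $\pi_2$, the splitting is preserved, and the Bramble--Hilbert exponents shift by one in whichever direction the extra derivative acts. The estimate (\ref{I3}) is slightly cleaner than the other two because the derivative direction coincides with the transverse direction of the edge, so the $h_j^{-1/2}$ factor never materializes and only two kinds of terms appear on the right.

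The main obstacle is bookkeeping. Each decomposition spawns several residuals mixing $(I-\pi_1)$, $(I-\pi_2)$, $\partial_1$, $\partial_2$, and for every residual one must pick the Bramble--Hilbert exponent $s$ so that its $h_1,h_2$ weights match one of the three allowed patterns, with unmatched residuals absorbed by stated terms of equal or greater singularity in $h_j$. This flexibility in $s$ is precisely what permits the mixed derivative $\partial_i^k\partial_j\phi$---rather than the naive $\partial_i^{k+1}\partial_j\phi$---to appear in the third term of (\ref{I1}), and analogously for the mixed terms in (\ref{I2})--(\ref{I3}).
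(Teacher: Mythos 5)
The paper itself contains no proof of Lemma \ref{lemma4.3}: it merely refers the reader to \cite[chapter 3.1]{oxford}, so there is no in-paper argument to compare against line by line. Your proof is correct and self-contained, and it follows the standard route for anisotropic approximation on tensor-product elements (which is essentially what the cited source does): factorize $\mathcal{Q}_0=\pi_1\pi_2$ into commuting one-dimensional orthogonal projections, split $\phi-\mathcal{Q}_0\phi=(I-\pi_2)\phi+\pi_2(I-\pi_1)\phi$, and feed one-dimensional Bramble--Hilbert bounds into the anisotropic trace inequality; your observation that the freedom in the Bramble--Hilbert order $s$ is what produces the mixed derivative $\partial_i^k\partial_j\phi$ rather than a $(k{+}1)$st-order one is exactly the right point. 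Two small remarks. First, your aside that for (\ref{I3}) ``the $h_j^{-1/2}$ factor never materializes'' is not literally true, since the second term of (\ref{I3}) is $h_i^{k}h_j^{-1/2}\Vert\partial_i^{k}\partial_j\phi\Vert_T$; what is true is that nothing more singular than $h_j^{-1/2}$ appears. Second, for (\ref{I2}) and especially (\ref{I3}) the trace inequality generates terms carrying two $x_j$-derivatives, such as $\Vert\partial_j^2(I-\pi_j)\phi\Vert_T$ and $\Vert\partial_j^2\pi_j(I-\pi_i)\phi\Vert_T$, so you need either the order-two Bramble--Hilbert estimate $\Vert\partial_r^2(I-\pi_r)\psi\Vert_T\leq Ch_r^{s-2}\Vert\partial_r^s\psi\Vert_T$ or the inverse inequality (\ref{I5}) to reduce $\partial_j^2\pi_j$ to $h_j^{-1}\partial_j\pi_j$; your stated toolkit lists only the $m=0,1$ cases. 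Both are routine additions rather than gaps.
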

\begin{lemma}\label{lemma4.4}
  Let $v \in \mathbb{Q}_{k}(T)$ with $k \geq 3$ such that the following inequalities holds,
  \begin{align}
    \Vert v \Vert_{\partial T_{i}} \leq &C h_{j}^{-\frac{1}{2}} \Vert v \Vert_{T}, \label{I4} \\
    \Vert \partial_{i} v \Vert_{T} \leq &C h_{i}^{-1} \Vert v \Vert_{T}, \label{I5}
  \end{align}
  where $C$ is a constant only depends on $k$ and $i\in\{1,2\}$, $i\neq j$. 
\end{lemma}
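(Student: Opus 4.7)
The plan is to prove both inequalities by a standard scaling argument: pull the element $T$ back to a fixed reference rectangle $\hat T=[0,1]^2$, invoke norm equivalence on the finite-dimensional space $\mathbb{Q}_{k}(\hat T)$, and then transport the estimate back to $T$. Since $T$ is an axis-aligned rectangle of side lengths $h_1$ and $h_2$, the affine map $x_1=a_1+h_1\hat x_1$, $x_2=a_2+h_2\hat x_2$ sends $T$ bijectively onto $\hat T$ and preserves the polynomial space $\mathbb{Q}_k$. The Jacobian is the constant $h_1h_2$, giving $\|v\|_T^2=h_1h_2\,\|\hat v\|_{\hat T}^2$; likewise, an edge in $\partial T_i$ has length $h_i$ and maps to a unit-length edge of $\hat T$, so $\|v\|_{\partial T_i}^2 = h_i\,\|\hat v\|_{\partial\hat T_i}^2$.

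For the trace inequality (I4), I would combine these scalings with the reference trace estimate $\|\hat v\|_{\partial\hat T_i}\le \hat C\,\|\hat v\|_{\hat T}$, which holds with a constant depending only on $k$ because the boundary $L^2$ seminorm and the interior $L^2$ norm are equivalent on the finite-dimensional space $\mathbb{Q}_k(\hat T)$. Direct substitution yields $\|v\|_{\partial T_i}^2 \le \hat C^2\,h_j^{-1}\|v\|_T^2$, which is precisely (I4). The key bookkeeping point is that the factor $h_j^{-1/2}$ arises from the ratio of the edge length $h_i$ to the area $h_1h_2$, not from the edge length itself; this anisotropy is indispensable on a Shishkin mesh, where $h$ and $H$ can differ by orders of magnitude.

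For the inverse inequality (I5), the chain rule applied to the same affine map gives $\partial_{x_i}v(x) = h_i^{-1}(\partial_{\hat x_i}\hat v)(\hat x)$, whence $\|\partial_{x_i}v\|_T^2 = h_i^{-2}\cdot h_1h_2\,\|\partial_{\hat x_i}\hat v\|_{\hat T}^2$. Applying the reference inverse estimate $\|\partial_{\hat x_i}\hat v\|_{\hat T}\le \hat C\,\|\hat v\|_{\hat T}$, which again rests on norm equivalence over $\mathbb{Q}_k(\hat T)$, and substituting $\|\hat v\|_{\hat T}^2 = (h_1h_2)^{-1}\|v\|_T^2$, produces $\|\partial_{x_i}v\|_T\le \hat C\,h_i^{-1}\|v\|_T$, as required.

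I do not anticipate any real obstacle: both statements reduce to classical norm-equivalence results on a single fixed finite-dimensional polynomial space, so the reference constants depend only on $k$. The only care needed is tracking the powers of $h_1$ and $h_2$ produced by the anisotropic scaling, which matters precisely because the tensor-product Shishkin partition can produce elements with very large aspect ratios in the transition regions.
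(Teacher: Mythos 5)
Your scaling argument is correct in every detail, including the anisotropic bookkeeping that yields $h_j^{-1/2}=\bigl(h_i/(h_1h_2)\bigr)^{1/2}$ in (\ref{I4}) and $h_i^{-1}$ in (\ref{I5}). The paper itself gives no proof of this lemma but defers to \cite[chapter 3.1]{oxford}, where the argument is exactly this pull-back to the reference square combined with norm equivalence on the finite-dimensional space $\mathbb{Q}_k(\hat T)$, so your route coincides with the intended one.
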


By applying Lemma \ref{lemma4.3} and Lemma \ref{lemma4.4}, we can deduce the following estimates which are valuable for the convergence analysis of the WG finite element schemes (\ref{3.3}).
\begin{lemma}
  Let $k \geq 3$, $u \in  H^{k+1}(\Omega)$. There exists a constant $C$ such that the following estimates hold true,
  \begin{align}
    &\sum_{i=1,2} \left( \sum_{T\in\mathcal{T}_{N}}\Vert\Delta u-\mathcal{Q}_{0}\Delta u\Vert_{\partial T_{i}}^{2} \right)^{\frac{1}{2}} \leq C \left(N^{-(k-\frac{3}{2})} +\varepsilon^{-1} N^{-(k-\frac{3}{2})} \ln^{k-\frac{3}{2}}N +\varepsilon^{-1} N^{\frac{1}{2}-\alpha}\right), \label{l1} \\
    &\sum_{i=1,2} \left( \sum_{T\in\mathcal{T}_{N}} \Vert\nabla\left(\Delta u-\mathcal{Q}_{0} \Delta u\right)\Vert_{\partial T_{i}}^{2} \right)^{\frac{1}{2}} \leq C \left(N^{-(k-\frac{5}{2})} +\varepsilon^{-2} N^{-(k-\frac{5}{2})} \ln^{k-\frac{5}{2}}N +\varepsilon^{-2} N^{\frac{3}{2}-\alpha}\right), \label{l2} \\
    &\sum_{i=1,2} \left( \sum_{T\in\mathcal{T}_{N}} \Vert\nabla u-\mathbf{Q}_{N}\nabla u\Vert_{\partial T_{i}}^{2} \right)^{\frac{1}{2}} \leq C \left(N^{-(k-\frac{1}{2})} +N^{-(k-\frac{1}{2})} \ln^{k-\frac{1}{2}}N +N^{\frac{1}{2}-\alpha}\right), \label{l3} \\
    &\sum_{i=1,2} \left( \sum_{T\in\mathcal{T}_{N}} \Vert\nabla (\mathcal{Q}_{0}u- u) \Vert_{\partial T_{i}}^{2} \right)^{\frac{1}{2}} \leq C \left(N^{-(k-\frac{1}{2})} +N^{-(k-\frac{1}{2})} \ln^{k-\frac{1}{2}}N +N^{\frac{3}{2}-\alpha}\right), \label{l4} \\
    &\sum_{i=1,2} \left( \sum_{T\in\mathcal{T}_{N}} \Vert \mathcal{Q}_{0}u-u\Vert_{\partial T_{i}}^{2} \right)^{\frac{1}{2}} \leq C \left(N^{-(k+\frac{1}{2})} +\varepsilon N^{-(k+\frac{1}{2})} \ln^{k+\frac{1}{2}}N +\varepsilon N^{\frac{1}{2}-\alpha}\right). \label{l5}
  \end{align}
\end{lemma}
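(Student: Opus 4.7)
The plan is to decompose $u$ according to Assumption \ref{assume2.1} as
\[
u = S + \sum_{l=1}^{4} E_l + E_{12} + E_{23} + E_{34} + E_{41},
\]
and, after applying the triangle inequality, estimate the projection error of each piece separately on each of the subregions $\Omega_s$, $\Omega_{l_i}$, $\Omega_{c_i}$ of Figure \ref{figure_1}, where the local mesh widths are $(H,H)$, $(h,H)/(H,h)$ or $(h,h)$ respectively, with $h \leq C\varepsilon N^{-1}\ln N$ and $H \leq CN^{-1}$.

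For the smooth component $S$, all derivatives that appear on the right of Lemma \ref{lemma4.3} are $O(1)$ by Assumption \ref{assume2.1}. Plugging the global worst-case mesh size $H$ into (\ref{I1})--(\ref{I3}) with $\phi$ equal to $\Delta u$, $\nabla\Delta u$, $\nabla u$, $\nabla(\mathcal{Q}_0 u - u)$ and $u$ respectively, squaring, summing over all $T \in \mathcal{T}_N$ and extracting the square root yields the leading $N$-only term in each of (\ref{l1})--(\ref{l5}) (the mild loss from the $h_j^{-1/2}$ factor balances against the area of a strip of elements, as usual).

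For a boundary-layer component, take $E_1$ as a representative: the estimate $|\partial_x^i\partial_y^j E_1|\leq C\varepsilon^{1-j}e^{-y/\varepsilon}$ decays exponentially away from the edge $y=0$. I split $\Omega$ along $y = \lambda$. On the fine strip $\{0\leq y\leq \lambda\}$ the mesh has $h_y = h \leq C\varepsilon N^{-1}\ln N$ and $h_x\in\{h,H\}$. Substituting the anisotropic derivative bounds of $E_1$ into (\ref{I1})--(\ref{I3}) (with $\phi$ replaced by $\Delta E_1$ for (\ref{l1}), $\nabla\Delta E_1$ for (\ref{l2}), etc.), the negative powers of $\varepsilon$ coming from $y$-derivatives are compensated by positive powers of $h$; the surviving $\ln N$ factors and the single $\varepsilon^{-1}$ (respectively $\varepsilon^{-2}$, $1$, $1$, $\varepsilon$) match the middle term of each estimate. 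On the complementary coarse region $\{\lambda \leq y \leq 1\}$, I keep the worst-case coarse mesh factors but absorb the decay through $e^{-\lambda/\varepsilon}\leq N^{-\alpha}$, which, combined with the volume of that region and the prefactor $\varepsilon^{1-j}$ or its analogue, produces the $N^{1/2-\alpha}$-type tail term. The edge layers $E_2$, $E_3$, $E_4$ are treated by the same splitting with roles of $x,y$ exchanged.

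For the corner components such as $E_{41}$, with $|\partial_x^i\partial_y^j E_{41}|\leq C\varepsilon^{1-i-j}e^{-x/\varepsilon}e^{-y/\varepsilon}$, I use the four-way decomposition induced by $x=\lambda$ and $y=\lambda$: on the small square $\{x,y\leq\lambda\}$ both $h_x = h_y = h$, so both exponents in $\varepsilon$ are absorbed by $h$, while on the three other pieces at least one exponential factor $e^{-\lambda/\varepsilon}\leq N^{-\alpha}$ is available and the argument above applies direction by direction. Collecting the contributions of $S$, the four edge layers and the four corner layers yields (\ref{l1})--(\ref{l5}).

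The main obstacle will be the anisotropic bookkeeping: inequalities (\ref{I1})--(\ref{I3}) contain three terms each, with distinct $h_i^{a}h_j^{b}$ weights, and for every component of $u$ the dominant term changes across $\Omega_s$, $\Omega_{l_i}$ and $\Omega_{c_i}$. In particular, one must verify that the $h_j^{-1/2}$ factors appearing in (\ref{I1}) and (\ref{I2}) do not destroy the gain from $h\ll H$ inside the layer strips, which is what fixes the exact logarithmic exponents $k-\tfrac32$, $k-\tfrac52$, $k-\tfrac12$, $k+\tfrac12$ appearing on the right of (\ref{l1})--(\ref{l5}); once this is checked in one representative case, the remaining estimates follow by the same template.
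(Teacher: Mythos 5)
Your overall strategy coincides with the paper's: decompose $u$ via Assumption \ref{assume2.1}, treat each component region by region, apply the anisotropic approximation bounds of Lemma \ref{lemma4.3} where the mesh is fine relative to the layer, and exploit the exponential smallness $e^{-\lambda/\varepsilon}\le N^{-\alpha}$ where it is not. The treatment of $S$, the fine-strip treatment of the edge and corner layers, and your identification of the anisotropic bookkeeping as the main labour all match the paper, which likewise only carries out (\ref{l1}) in detail.

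The one step that would fail as literally written is your handling of a layer component on the coarse complementary region (e.g.\ $E_1$ on the paper's $\Omega_r^{2}$): you propose to ``keep the worst-case coarse mesh factors'', i.e.\ to continue substituting the derivative bounds of $E_1$ into (\ref{I1})--(\ref{I3}) with the coarse width $H$. There the $(k+1)$-st $y$-derivative of $\Delta E_1$ costs a factor of order $\varepsilon^{-k}$ or worse, and since $H\sim N^{-1}$ carries no power of $\varepsilon$ to compensate, the resulting bound contains uncancelled negative powers $(\varepsilon N)^{-(k-\frac32)}$ relative to the claimed tail $\varepsilon^{-1}N^{\frac12-\alpha}$, so it is not uniform in $\varepsilon$. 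The paper avoids the approximation property there entirely: it writes $\Vert\Delta E_1-\mathcal{Q}_0\Delta E_1\Vert_{\partial T_1}\le\Vert\Delta E_1\Vert_{\partial T_1}+\Vert\mathcal{Q}_0\Delta E_1\Vert_{\partial T_1}$, bounds the second term by the inverse/trace estimate (\ref{I4}) of Lemma \ref{lemma4.4} together with the $L^2$-stability of $\mathcal{Q}_0$, so that only the magnitude of $\Delta E_1$ itself (a single $\varepsilon^{-1}$ times $e^{-y/\varepsilon}$) enters, and the factor $e^{-\lambda/\varepsilon}\le N^{-\alpha}$ then yields $\varepsilon^{-1}N^{\frac12-\alpha}$. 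Your closing remark about the prefactor $\varepsilon^{1-j}$ suggests you intended exactly this, but the mechanism (triangle inequality plus Lemma \ref{lemma4.4}, not Lemma \ref{lemma4.3}) must be stated; with that correction the remainder of your argument goes through as in the paper.
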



\begin{proof}
  To derive (\ref{l1}), we estimate $\sum_{T\in\mathcal{T}{N}}\Vert\Delta u-\mathcal{Q}_{0}\Delta u\Vert_{\partial T_{i}}$ by breaking down the function $u$ in Assumption \ref{assume2.1}. Each term in the decomposition will be considered individually. To begin, we can apply inequality (\ref{I1}) to obtain
  \begin{align*}
    \left( \sum_{T\in\Omega_{c}} \Vert\Delta S-\mathcal{Q}_{0}\Delta S\Vert_{\partial T_{i}}^{2} \right)^{\frac{1}{2}} \leq &C h^{k-\frac{3}{2}}\cdot N \cdot h \leq C N^{-(k-\frac{3}{2})},\\
    \left( \sum_{T\in\Omega_{s}} \Vert\Delta S-\mathcal{Q}_{0}\Delta S\Vert_{\partial T_{i}}^{2} \right)^{\frac{1}{2}} \leq &C H^{k-\frac{3}{2}}\cdot N \cdot H \leq C N^{-(k-\frac{3}{2})},\\
    \left( \sum_{T\in\Omega_{l}} \Vert\Delta S-\mathcal{Q}_{0}\Delta S\Vert_{\partial T_{i}}^{2} \right)^{\frac{1}{2}} \leq &C N\cdot h_{1}^{\frac{1}{2}}h_{2}^{\frac{1}{2}} \left(h_{j}^{k-\frac{3}{2}} +h_{j}^{-\frac{1}{2}}h_{i}^{k-1} +h_{j}^{\frac{1}{2}}h_{i}^{k-2}
    \right)\\
    \leq &C N^{-(k-\frac{3}{2})},
  \end{align*}
  where we have used the fact that $h_{i} = h$ in the domain $\Omega_{c}$ and $h_{i} = H$ in the domain $\Omega_{s}$, for $i = 1, 2$. Next, we provide estimates only for the sets of element edges parallel to the $x$ axis, as the orther part follows a similar way. Considering the boundary layer $E_{1}$, we obtain
  \begin{align*}
    \left( \sum_{T\in\Omega_{c}} \Vert\Delta E_{1}-\mathcal{Q}_{0}\Delta E_{1}\Vert_{\partial T_{1}}^{2} \right)^{\frac{1}{2}} \leq &C h^{k-\frac{3}{2}} \cdot \varepsilon^{-k} \left(\int_{\Omega_{c}} e^{-\frac{2y}{\varepsilon}} \,dxdy\right)^{\frac{1}{2}}\\
    \leq &C \varepsilon^{-k} \cdot h^{k-\frac{3}{2}} \cdot \varepsilon \ln^{\frac{1}{2}}N\\
    \leq &C \varepsilon^{-\frac{1}{2}} N^{-(k-\frac{3}{2})} \ln^{k-1}N,\\
    \left( \sum_{T\in\Omega_{l}} \Vert\Delta E_{1}-\mathcal{Q}_{0}\Delta E_{1}\Vert_{\partial T_{1}}^{2} \right)^{\frac{1}{2}} \leq &C \left(\right. \varepsilon^{2-k} h_{2}^{k-\frac{3}{2}} +\varepsilon h_{1}^{k-1}h_{2}^{-\frac{1}{2}} +h_{1}^{k-2}h_{2}^{\frac{1}{2}} +\varepsilon^{-k} h_{2}^{k-\frac{3}{2}}\\
    &  +\varepsilon^{-1} h_{1}^{k-1}h_{2}^{-\frac{1}{2}} +\varepsilon^{-2} h_{1}^{k-2}h_{2}^{\frac{1}{2}} \left.\right)\left(\int_{\Omega_{l}} e^{-\frac{2y}{\varepsilon}} \,dxdy\right)^{\frac{1}{2}}
    \\
    \leq &C \varepsilon^{-\frac{3}{2}} N^{-(k-\frac{3}{2})} \ln^{k-\frac{3}{2}}N \cdot \varepsilon^{\frac{1}{2}}\\
    \leq &C \varepsilon^{-1} N^{-(k-\frac{3}{2})} \ln^{k-\frac{3}{2}}N,
  \end{align*}
  where we have used the inequality (\ref{I1}). As for the region $\Omega_{r}^{2}$ that remains in the partition, by applying inequality (\ref{I4}), we derive
  \begin{align*}
    \left( \sum_{T\in\Omega_{r}^{2}} \Vert\Delta E_{1}-\mathcal{Q}_{0}\Delta E_{1}\Vert_{\partial T_{1}}^{2} \right)^{\frac{1}{2}}
    \leq &C \sum_{T\in\Omega_{r}^{2}} \Vert\Delta E_{1} \Vert_{\partial T_{1}} +\sum_{T\in\Omega_{r}^{2}} h_{2}^{-\frac{1}{2}} \Vert \mathcal{Q}_{0}\Delta E_{1}\Vert_{T}\\
    \leq &C (\varepsilon +\varepsilon^{-1} )\left(N \int_{0}^{1} e^{-\frac{2y}{\varepsilon}}\,dx \right)^{\frac{1}{2}} +H^{-\frac{1}{2}}\left\Vert \Delta E_{1} \right\Vert_{\Omega_{r}^{2}}\\
    \leq &C \left(\varepsilon^{-1} N^{\frac{1}{2}-\alpha} +H^{-\frac{1}{2}}(\varepsilon^{\frac{3}{2}} N^{-\alpha} +\varepsilon^{-\frac{1}{2}} N^{-\alpha})\right)\\
    \leq &C \varepsilon^{-1} N^{\frac{1}{2}-\alpha}.
  \end{align*}
  A similar bound can be readily obtained for $E_2, E_3$, and $E_4$. Let's focus on estimating $E_{41}$ for the concer layers, as the other concer layers in the decomposition from Assumption \ref{assume2.1} follow a similar way. By applying inequalities (\ref{I1}) and (\ref{I4}), we arrive at
  \begin{align*}
    \left( \sum_{T\in\Omega_{c}} \Vert\Delta E_{41}-\mathcal{Q}_{0}\Delta E_{41}\Vert_{\partial T_{i}}^{2} \right)^{\frac{1}{2}} \leq &C\varepsilon^{-k} h^{k-\frac{3}{2}} \cdot \left(\int_{\Omega_{c}} e^{-\frac{2x}{\varepsilon}} e^{-\frac{2y}{\varepsilon}} \,dxdy\right)^{\frac{1}{2}}\\
    \leq &C \varepsilon^{-\frac{1}{2}} N^{-(k-\frac{3}{2})} \ln^{k-\frac{3}{2}}N.\\
    \left( \sum_{T\in \Omega_{r}^{1}} \Vert\Delta E_{41}-\mathcal{Q}_{0}\Delta E_{41}\Vert_{\partial T_{1}}^{2} \right)^{\frac{1}{2}}
    \leq &C \sum_{T\in \Omega_{r}^{1}} \left(\left\Vert \Delta E_{41}\right\Vert_{\partial T_{1}} +h_{2}^{-\frac{1}{2}} \left\Vert \mathcal{Q}_{0}\Delta E_{41}\right\Vert_{T} \right)\\
    \leq &C \varepsilon^{-1} \left(N \int_{\lambda}^{1-\lambda} e^{-\frac{2x}{\varepsilon}} e^{-\frac{2y}{\varepsilon}}\,dx\right)^{\frac{1}{2}} +h^{-\frac{1}{2}} \left\Vert \Delta E_{41} \right\Vert_{\Omega_{r}^{1}}\\
    \leq &C \left(\varepsilon^{-1} \cdot \varepsilon^{\frac{1}{2}} N^{\frac{1}{2}-\alpha} +h^{-\frac{1}{2}} \cdot N^{-\alpha}\right)\\
    \leq &C \varepsilon^{-\frac{1}{2}} N^{\frac{1}{2}-\alpha},\\
    \left( \sum_{T\in \Omega_{r}^{2}} \Vert\Delta E_{41}-\mathcal{Q}_{0}\Delta E_{41}\Vert_{\partial T_{1}}^{2} \right)^{\frac{1}{2}}
    \leq &C \sum_{T\in \Omega_{r}^{2}} \left(\left\Vert \Delta E_{41}\right\Vert_{\partial T_{1}} +h_{2}^{-\frac{1}{2}} \left\Vert \mathcal{Q}_{0}\Delta E_{41}\right\Vert_{T} \right)\\
    \leq &C \varepsilon^{-1} \left(N \int_{0}^{1} e^{-\frac{2x}{\varepsilon}} e^{-\frac{2y}{\varepsilon}}\,dx\right)^{\frac{1}{2}} +H^{-\frac{1}{2}} \left\Vert \Delta E_{41} \right\Vert_{\Omega_{r}^{2}}\\
    \leq &C \left(\varepsilon^{-1} \cdot \varepsilon^{\frac{1}{2}} N^{\frac{1}{2}-\alpha} +H^{-\frac{1}{2}} \cdot N^{-\alpha}\right)\\
    \leq &C \varepsilon^{-\frac{1}{2}} N^{\frac{1}{2}-\alpha}.
  \end{align*}
  By combining the aforementioned proofs, we establish inequality (\ref{l1}). Likewise, for inequalities (\ref{l2}), (\ref{l3}), (\ref{l4}), and (\ref{l5}), the proof follows a similar way as above. Therefore, we omit the detailed explanation.
\end{proof}  

\section{Error estimate}
In this section, the objective is to provide error estimates for the WG solution $u_N$ obtained from (\ref{3.3}).

\subsection{Error equation}
We introduce notation used in error analysis to represent the error between the finite element solution and the $L^2$ projection of the exact solution, as follows
\begin{align*}
  e_N=\mathcal{Q}_{N}u-u_{N}=\{e_{0}, e_{b}, \mathbf{e}_{g}\}. 
\end{align*}
The convergence analysis relies on the error equation, and in the following lemma, we will establish an equation that the error $e_N$ satisfies.

\begin{lemma}
  Suppose $u$ and $u_N = \{u_0, u_b, \mathbf{u}_{g}\} \in V_N$ represent the solutions of (\ref{1.1}) and (\ref{3.3}), respectively. Then for any $v \in V_{N}^{0}$, we have
  \begin{align}\label{5.1}
    a(e_{N},v)=l_{1}(u,v)-l_{2}(u,v)+l_{3}(u,v)+s(\mathcal{Q}_{N}u,v),
\end{align}
where
\begin{align*}
    &l_{1}(u,v)=\sum_{T\in\mathcal{T}_N}\varepsilon^2\left\langle\Delta u-\mathcal{Q}_{0}\Delta u,\left(\nabla v_0-\mathbf{v}_g\right)\cdot\mathbf{n}\right\rangle_{\partial T},\\
    &l_{2}(u,v)=\sum_{T\in\mathcal{T}_N}\varepsilon^2\left\langle\nabla\left(\Delta u-\mathcal{Q}_{0}\Delta u\right)\cdot\mathbf{n},v_0-v_b\right\rangle_{\partial T},\\
    &l_{3}(u,v)=\sum_{T\in\mathcal{T}_{N}}\left\langle\left(\nabla u-\mathbf{Q}_{N}\nabla u\right)\cdot\mathbf{n},v_0-v_b\right\rangle_{\partial T}.
\end{align*}
\end{lemma}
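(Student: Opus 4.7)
The plan is to start by recognizing that $(\ref{5.1})$ is the defect equation obtained when the projection $\mathcal{Q}_N u$ is substituted into the WG bilinear form; denoting that form by $a(w,v):=\varepsilon^{2}(\Delta_w w,\Delta_w v)_{\mathcal{T}_N}+(\nabla_w w,\nabla_w v)_{\mathcal{T}_N}+s(w,v)$, the scheme $(\ref{3.3})$ gives $a(u_N,v)=(f,v_0)$, so I only need to show
\begin{equation*}
a(\mathcal{Q}_N u,v)=(f,v_0)+l_1(u,v)-l_2(u,v)+l_3(u,v)+s(\mathcal{Q}_N u,v).
\end{equation*}
Using Lemma~\ref{lemma4.1} and Lemma~\ref{lemma4.2}, I would first rewrite $\Delta_w(\mathcal{Q}_N u)=\mathcal{Q}_0\Delta u$ and $\nabla_w(\mathcal{Q}_N u)=\mathbf{Q}_N\nabla u$, so that the bilinear action becomes $\varepsilon^{2}\sum_T(\mathcal{Q}_0\Delta u,\Delta_w v)_T+\sum_T(\mathbf{Q}_N\nabla u,\nabla_w v)_T+s(\mathcal{Q}_Nu,v)$.

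Next, I would unfold the two weak operators acting on the test function via their defining identities $(\ref{3.1})$ and $(\ref{3.2})$: take $\varphi=\mathcal{Q}_0\Delta u$ in $(\ref{3.1})$ and $\mathbf{q}=\mathbf{Q}_N\nabla u$ in $(\ref{3.2})$. This turns each local pairing into an interior integral of $v_0$ against $\Delta(\mathcal{Q}_0\Delta u)$ (respectively $\nabla\cdot\mathbf{Q}_N\nabla u$) plus boundary pairings involving $v_b$ and $\mathbf{v}_g\cdot\mathbf{n}$. I then integrate by parts on the interior integrals to move the derivatives back onto $v_0$, producing $(\Delta v_0,\mathcal{Q}_0\Delta u)_T$ and $(\nabla v_0,\mathbf{Q}_N\nabla u)_T$ together with boundary contributions $\langle v_0,\cdot\rangle_{\partial T}$. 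Because $\Delta v_0\in\mathbb{Q}_k(T)$ and $\nabla v_0\in[\mathbb{Q}_k(T)]^{2}$, the defining property of the $L^{2}$-projections $\mathcal{Q}_0$ and $\mathbf{Q}_N$ lets me drop the projections inside these volume integrals, after which a second integration by parts on $(\Delta v_0,\Delta u)_T$ yields $(v_0,\Delta^{2}u)_T$ plus boundary pieces, and a single integration by parts on $(\nabla v_0,\nabla u)_T$ yields $-(v_0,\Delta u)_T$ plus boundary pieces.

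Once all the volume terms have been collected, the original PDE $\varepsilon^{2}\Delta^{2}u-\Delta u=f$ summed over elements produces exactly $(f,v_0)$. The remaining work is bookkeeping on the edge integrals: by adding and subtracting appropriate copies of $\langle v_b,\nabla\Delta u\cdot\mathbf{n}\rangle$, $\langle\mathbf{v}_g\cdot\mathbf{n},\Delta u\rangle$, and $\langle v_b,\nabla u\cdot\mathbf{n}\rangle$, I can reorganize everything into the differences $v_0-v_b$ and $\nabla v_0-\mathbf{v}_g$ paired with the projection errors $\Delta u-\mathcal{Q}_0\Delta u$, $\nabla(\Delta u-\mathcal{Q}_0\Delta u)$ and $\nabla u-\mathbf{Q}_N\nabla u$, which are precisely $l_1$, $-l_2$ and $l_3$. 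The leftover terms of the form $\sum_T\langle v_b,\nabla\Delta u\cdot\mathbf{n}\rangle_{\partial T}$, $\sum_T\langle\mathbf{v}_g\cdot\mathbf{n},\Delta u\rangle_{\partial T}$, and $\sum_T\langle v_b,\nabla u\cdot\mathbf{n}\rangle_{\partial T}$ all drop out: on interior edges $v_b$ and $\mathbf{v}_g$ are single-valued while the normal flips, and on $\partial\Omega$ we have $v_b=0$ and $\mathbf{v}_g\cdot\mathbf{n}=0$ because $v\in V_N^{0}$.

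The main obstacle I anticipate is not any single estimate but the bookkeeping of the many boundary terms produced by two successive integrations by parts for the biharmonic part: I must be careful to group them into the precise $v_0-v_b$ and $\nabla v_0-\mathbf{v}_g$ combinations that match $l_1$ and $l_2$, and to justify that the residual single-valued pieces cancel globally. Once that grouping is completed correctly, subtracting $a(u_N,v)=(f,v_0)$ from $a(\mathcal{Q}_N u,v)$ immediately gives $(\ref{5.1})$.
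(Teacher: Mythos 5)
Your proposal is correct and follows essentially the same route as the paper: it uses Lemmas \ref{lemma4.1}--\ref{lemma4.2} to commute the weak operators with the projections, unfolds the definitions (\ref{3.1})--(\ref{3.2}) on the test function, integrates by parts, tests the PDE with $v_0$, and cancels the residual single-valued edge terms via the flipping normal on interior edges and the conditions $v_b=0$, $\mathbf{v}_g\cdot\mathbf{n}=0$ on $\partial\Omega$, before subtracting the scheme equation. The only cosmetic difference is that you integrate $(\Delta v_0,\Delta u)_T$ up to $(v_0,\Delta^2 u)_T$ whereas the paper runs the same identity in the opposite direction starting from the tested PDE; the resulting bookkeeping is identical.
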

\begin{proof}
  Using the definition of weak Laplacian (\ref{3.2}), integration by parts and Lemma \ref{lemma4.1}, for any $v \in V_{N}^{0}$, we yield
    \begin{align*}
        \left(\Delta_{w}\mathcal{Q}_{N}u,\Delta_{w}v\right)_{T}
        &=\left(v_0,\Delta\left(\Delta_{w}\mathcal{Q}_{N}u\right)\right)_{T}+\left\langle\mathbf{v}_g\cdot\mathbf{n},\Delta_{w}\mathcal{Q}_{N}u\right\rangle_{\partial T}-\left\langle v_b,\nabla\left(\Delta_{w}\mathcal{Q}_{N}u\right)\cdot\mathbf{n}\right\rangle_{\partial T}\\
        &=\left(\Delta v_0,\Delta_{w}\mathcal{Q}_{N}u\right)_{T}+\left\langle v_0,\nabla\left(\Delta_{w}\mathcal{Q}_{N}u\right)\cdot\mathbf{n}\right\rangle_{\partial T}-\left\langle\nabla v_0\cdot\mathbf{n},\Delta_{w}\mathcal{Q}_{N}u\right\rangle_{\partial T}\\
        &\quad+\left\langle\mathbf{v}_g\cdot\mathbf{n},\Delta_{w}\mathcal{Q}_{N}u\right\rangle_{\partial T}-\left\langle v_b,\nabla\left(\Delta_{w}\mathcal{Q}_{N}u\right)\cdot\mathbf{n}\right\rangle_{\partial T}\\
        &=\left(\Delta v_0,\Delta_{w}\mathcal{Q}_{N}u\right)_{T}+\left\langle v_{0}-v_{b},\nabla\left(\Delta_{w}\mathcal{Q}_{N}u\right)\cdot\mathbf{n}\right\rangle_{\partial T}-\left\langle\left(\nabla v_0-\mathbf{v}_g\right)\cdot\mathbf{n},\Delta_{w}\mathcal{Q}_{N}u\right\rangle_{\partial T}\\
        &=\left(\Delta v_0,\mathcal{Q}_{0}\Delta u\right)_{T}+\left\langle v_{0}-v_{b},\nabla\left(\mathcal{Q}_{0}\Delta u\right)\cdot\mathbf{n}\right\rangle_{\partial T}-\left\langle\left(\nabla v_0-\mathbf{v}_g\right)\cdot\mathbf{n},\mathcal{Q}_{0}\Delta u\right\rangle_{\partial T}\\
        &=\left(\Delta u,\Delta v_0\right)_{T}+\left\langle v_{0}-v_{b},\nabla\left(\mathcal{Q}_{0}\Delta u\right)\cdot\mathbf{n}\right\rangle_{\partial T}-\left\langle\left(\nabla v_0-\mathbf{v}_g\right)\cdot\mathbf{n},\mathcal{Q}_{0}\Delta u\right\rangle_{\partial T},
    \end{align*}
    which implies that
    \begin{equation}\label{5.2}
    \begin{split}
        \sum_{T\in\mathcal{T}_h}\left(\Delta u,\Delta v_0\right)_{T}=&\left(\Delta_{w}\mathcal{Q}_{N}u,\Delta_{w}v\right)_{N}-\sum_{T\in\mathcal{T}_h}\left\langle v_0-v_b,\nabla\left(\mathcal{Q}_{0}\Delta u\right)\cdot\mathbf{n}\right\rangle_{\partial T}\\
        &+\sum_{T\in\mathcal{T}_h}\left\langle\left(\nabla v_0-\mathbf{v}_g\right)\cdot\mathbf{n},\mathcal{Q}_{0}\Delta u\right\rangle_{\partial T}.
    \end{split}
    \end{equation}
    Likewise, we deduce from integration by parts and Lemma \ref{lemma4.2} the following
    \begin{align*}
        \left(\nabla_{w}\mathcal{Q}_{N}u, \nabla_{w}v\right)_{T}&=-\left(v_0, \nabla\cdot\nabla_w\mathcal{Q}_{N}u\right)_{T}+\left\langle v_b, \nabla_w\mathcal{Q}_{N}u\cdot\mathbf{n}\right\rangle_{\partial T}\\
        &=\left(\nabla v_{0}, \nabla_w\mathcal{Q}_{N}u\right)_{T}+\left\langle v_b-v_0, \nabla_w\mathcal{Q}_{N}u\cdot\mathbf{n}\right\rangle_{\partial T}\\
        &=\left(\nabla v_{0}, \mathbf{Q}_{N}\nabla u\right)_{T}+\left\langle v_b-v_0, \mathbf{Q}_{N}\nabla u\cdot\mathbf{n}\right\rangle_{\partial T}\\
        &=\left(\nabla v_{0}, \nabla u\right)_{T}-\left\langle v_{0}-v_{b}, \left(\mathbf{Q}_{N}\nabla u\right)\cdot\mathbf{n}\right\rangle_{\partial T},
    \end{align*}
    which implies that
    \begin{equation}\label{5.3}
    \begin{split}
        \sum_{T\in\mathcal{T}_h}\left(\nabla u,\nabla v_0\right)_{T}=\left(\nabla_{w}\mathcal{Q}_{N}u,\nabla_{w}v\right)_{N}+\sum_{T\in\mathcal{T}_h}\left\langle v_0-v_b,\mathbf{Q}_{N}\nabla u\cdot\mathbf{n}\right\rangle_{\partial T}.
    \end{split}
    \end{equation}
    Test equation (\ref{1.1}) with the vector $v_0$ of $v=\{v_0, v_b, \mathbf{v}g\} \in V_{N}^{0}$, we find
    \begin{align*}
      \varepsilon^2\left(\Delta^2u,v_0\right)-\left(\Delta u,v_0\right)=\left(f,v_0\right).
    \end{align*}
    Using the boundary conditions that $\mathbf{v}_{g}\cdot\mathbf{n}$ and $v_b$ vanish on $\partial \Omega$, along with integration by parts, we derive
    \begin{align*}
        \varepsilon^2\left(\Delta^2u,v_0\right)-\left(\Delta u,v_0\right)
        =&\sum_{T\in\mathcal{T}_h} \Big[ \varepsilon^{2}\left(\Delta u,\Delta v_0\right)_{T}-\varepsilon^{2}\left\langle\Delta u,\nabla v_{0}\cdot\mathbf{n}\right\rangle_{\partial T}+\left(\nabla u,\nabla v_0\right)_{T}\\
        &+\varepsilon^{2}\left\langle\nabla(\Delta u)\cdot\mathbf{n},v_{0}\right\rangle_{\partial T}-\left\langle\nabla u\cdot\mathbf{n},v_0\right\rangle_{\partial T} \Big]\\
        =&\sum_{T\in\mathcal{T}_h} \Big[ \varepsilon^{2}\left(\Delta u,\Delta v_0\right)_{T}-\varepsilon^{2}\left\langle\Delta u,\left(\nabla v_{0}-\mathbf{v}_{g}\right)\cdot\mathbf{n}\right\rangle_{\partial T}+\left(\nabla u,\nabla v_0\right)_{T}\\
        &+\varepsilon^{2}\left\langle\nabla(\Delta u)\cdot\mathbf{n},v_{0}-v_{b}\right\rangle_{\partial T}-\left\langle\nabla u\cdot\mathbf{n},v_0-v_b\right\rangle_{\partial T} \Big].
    \end{align*}
    Upon applying the previously mentioned equation together with (\ref{5.2}) and (\ref{5.3}), we get
    \begin{align*}
      \varepsilon^2\left(\Delta_{w}\mathcal{Q}_{N}u,\Delta_{w}v\right)_{N}+\left(\nabla_{w}\mathcal{Q}_{N}u,\nabla_{w}v\right)_{N}=\left(f,v_0\right)+l_{1}(u,v)-l_{2}(u,v)+l_{3}(u,v).
    \end{align*}
    By adding $s\left(\mathcal{Q}_{N}u,v\right)$ to both sides of the above equation, we arrive at
    \begin{equation}\label{5.4}
        a\left(\mathcal{Q}_{N}u,v\right)=\left(f,v_0\right)+l_{1}(u,v)-l_{2}(u,v)+l_{3}(u,v)+s\left(\mathcal{Q}_{N}u,v\right).
    \end{equation}
    Subtracting (\ref{3.3}) from (\ref{5.4}) yields the error equation as follows
    \begin{align*}
      a(e_{N},v)=l_{1}(u,v)-l_{2}(u,v)+l_{3}(u,v)+s(\mathcal{Q}_{N}u,v),
    \end{align*}
    for all $v\in V_{N}^{0}$. This completes the derivation of (\ref{5.1}).
\end{proof}

\subsection{Error estimate}
The following theorem is the estimate for the error function $e_{N}$ in the triple-bar norm (\ref{3.4}), which is an $H^2$-equivalent norm in $V_{N}^{0}$.
\begin{theorem}
  Consider the weak Galerkin finite element solution from (\ref{3.3}), denoted as $u_{N} \in V_{N}$. Assuming that $u \in H^{k+1}(\Omega)$, it follows that there exists a constant $C$ such that
  \begin{equation}\label{5.5}
    \3bar e_{N} \3bar \leq C \left( N^{-(k-1)} \ln^{k-\frac{3}{2}}N \right).
  \end{equation}
\end{theorem}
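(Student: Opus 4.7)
The plan is to test the error equation (\ref{5.1}) with $v = e_N$. Since the left-hand side becomes $a(e_N, e_N) = \varepsilon^2(\Delta_w e_N, \Delta_w e_N)_{\mathcal{T}_N} + (\nabla_w e_N, \nabla_w e_N)_{\mathcal{T}_N} + s(e_N, e_N) = \3bar e_N \3bar^2$, the theorem reduces to bounding each of the four terms on the right-hand side, namely $l_1(u, e_N)$, $-l_2(u, e_N)$, $l_3(u, e_N)$, and $s(\mathcal{Q}_N u, e_N)$, by a quantity of the form $C N^{-(k-1)} \ln^{k-3/2} N \, \3bar e_N \3bar$. Absorbing a single factor of $\3bar e_N \3bar$ then yields (\ref{5.5}).

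To handle $l_1$, I apply Cauchy--Schwarz edge-by-edge and sum over elements, writing $|l_1(u,v)| \leq \varepsilon^2 \bigl(\sum_T \|\Delta u - \mathcal{Q}_0 \Delta u\|_{\partial T}^2\bigr)^{1/2} \bigl(\sum_T \|\nabla v_0 - \mathbf{v}_g\|_{\partial T}^2\bigr)^{1/2}$. The first factor is controlled by (\ref{l1}), and the second is handled by the first piece of the stabilizer, which gives $\bigl(\sum_T \|\nabla v_0 - \mathbf{v}_g\|_{\partial T}^2\bigr)^{1/2} \leq \varepsilon^{-1} h^{1/2} s(v,v)^{1/2} \leq \varepsilon^{-1} h^{1/2} \3bar v \3bar$. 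Substituting $\alpha = k+1$ and $h \leq C\varepsilon N^{-1}\ln N$ gives the claimed rate. The terms $l_2$ and $l_3$ are treated analogously, but here the companion factor is $\bigl(\sum_T \|v_0 - v_b\|_{\partial T}^2\bigr)^{1/2}$, bounded by $(\varepsilon^2 h^{-2} H^{-1} + H^{-1})^{-1/2} \3bar v \3bar$; combining with (\ref{l2}) for $l_2$ and (\ref{l3}) for $l_3$ yields bounds of the desired order.

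The stabilizer contribution $s(\mathcal{Q}_N u, e_N)$ requires estimates for $\|\nabla \mathcal{Q}_0 u - \mathbf{Q}_g(\nabla u)\|_{\partial T}$ and $\|\mathcal{Q}_0 u - \mathcal{Q}_b u\|_{\partial T}$. Inserting $\pm \nabla u$ in the first and applying the triangle inequality, I split it into $\|\nabla \mathcal{Q}_0 u - \nabla u\|_{\partial T}$, controlled by (\ref{l4}), and $\|\nabla u - \mathbf{Q}_g(\nabla u)\|_{\partial T}$, controlled by (\ref{l3}). A similar splitting with $\pm u$ in the second term, together with (\ref{l5}) and the standard $L^2(e)$-projection estimate for $u - \mathcal{Q}_b u$, handles the remaining piece. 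The stabilizer weights then combine with these approximation bounds to remain within the target rate after applying Cauchy--Schwarz and pulling out $s(e_N, e_N)^{1/2} \leq \3bar e_N \3bar$.

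The main obstacle is the bookkeeping: each of (\ref{l1})--(\ref{l5}) contains three distinct terms arising from the smooth, boundary-layer, and corner-layer pieces of the Shishkin decomposition in Assumption \ref{assume2.1}, and each must be checked to combine with the appropriate powers of $\varepsilon$, $h$, $H$, and $\ln N$ from the stabilizer weights and from the explicit $\varepsilon^2$ factors in $l_1$ and $l_2$. Verifying that the worst of these combinations fits the uniform bound $C N^{-(k-1)} \ln^{k-3/2} N$ without any residual $\varepsilon$ dependence, and that the choice $\alpha = k+1$ indeed dominates the exponential-decay terms $\varepsilon^{-1} N^{1/2-\alpha}$, is the key computational step.
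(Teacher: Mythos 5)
Your proposal is correct and follows essentially the same route as the paper: test the error equation with $v=e_N$, apply Cauchy--Schwarz with the stabilizer weights to each of $l_1$, $l_2$, $l_3$ and $s(\mathcal{Q}_N u,e_N)$, invoke the projection estimates (\ref{l1})--(\ref{l5}), and absorb one factor of $\3bar e_N \3bar$. The only (immaterial) difference is in the stabilizer term, where you insert $\pm\nabla u$ and $\pm u$ with the triangle inequality while the paper uses the $L^2(e)$-projection stability to replace $\mathbf{Q}_g\nabla u$ by $\nabla u$ and $\mathcal{Q}_b u$ by $u$ directly; both reduce to the same estimates.
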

\begin{proof}
  Upon substituting $v = e_{N}$ into the error equation (\ref{5.1}), we derive the following equation,
  \begin{equation}\label{5.6}
    \3bar e_{N}\3bar^{2}=l_{1}(u,e_{N})-l_{2}(u,e_{N})+l_{3}(u,e_{N})+s(\mathcal{Q}_{N}u,e_{N}).
\end{equation}
Using the Cauchy-Schwarz inequality, the meshwidth characteristics (\ref{2.2}) and the inequality (\ref{l1}), we derive
\begin{equation}\label{5.7}
  \begin{split}
      l_{1}\left(u,e_{N}\right) &= \sum_{T\in\mathcal{T}_{N}}\varepsilon^{2}\left\langle\Delta u-\mathcal{Q}_{0}\Delta u,\left(\nabla e_0-\mathbf{e}_g\right)\cdot\mathbf{n}\right\rangle_{\partial T}\\
      &\leq \sum_{i=1,2} \left(\sum_{T\in\mathcal{T}_{N}} \varepsilon^{2}h \Vert\Delta u-\mathcal{Q}_{0}\Delta u\Vert_{\partial T_{i}}^{2}\right)^{\frac{1}{2}}\left(\sum_{T\in\mathcal{T}_{N}} \varepsilon^{2} h^{-1} \Vert\nabla e_0-\mathbf{e}_g\Vert_{\partial T_{i}}^{2}\right)^{\frac{1}{2}}\\
      &\leq C \varepsilon h^{\frac{1}{2}} \sum_{i=1,2} \left( \sum_{T\in\mathcal{T}_{N}}\Vert\Delta u-\mathcal{Q}_{0}\Delta u\Vert_{\partial T_{i}}^{2} \right)^{\frac{1}{2}} \3bar e_{N}\3bar\\
      &\leq C \varepsilon h^{\frac{1}{2}} \left(N^{-(k-\frac{3}{2})} +\varepsilon^{-1} N^{-(k-\frac{3}{2})} \ln^{k-\frac{3}{2}}N +\varepsilon^{-1} N^{\frac{1}{2}-\alpha}\right) \3bar e_{N}\3bar\\
      &\leq C \left(\varepsilon N^{-(k-1)} +\varepsilon^{\frac{1}{2}} N^{-(k-1)} \ln^{k-1}N +\varepsilon^{\frac{1}{2}} N^{\frac{1}{2}-\alpha}\right) \3bar e_{N}\3bar.
  \end{split}
\end{equation}
From both the Cauchy-Schwarz inequality and (\ref{2.2}) and the inequality (\ref{l2}), it can be deduced that
\begin{equation}\label{5.8}
  \begin{split}
      l_{2}\left(u,e_{N}\right)&=\sum_{T\in\mathcal{T}_{N}}\varepsilon^{2}\left\langle\nabla\left(\Delta u-\mathcal{Q}_{0} \Delta u\right)\cdot\mathbf{n},e_0-e_b\right\rangle_{\partial T}\\
      &\leq \sum_{i=1,2}\left(\sum_{T\in\mathcal{T}_{N}}\varepsilon^{2}h^{2}H \Vert\nabla\left(\Delta u-\mathcal{Q}_{0} \Delta u\right)\Vert_{\partial T_{i}}^{2}\right)^{\frac{1}{2}} \left(\sum_{T\in\mathcal{T}_{N}} \varepsilon^{2} h^{-2}H^{-1}\Vert e_0-e_b\Vert_{\partial T_{i}}^{2}\right)^{\frac{1}{2}}\\
      &\leq C \varepsilon hH^{\frac{1}{2}} \sum_{i=1,2} \left( \sum_{T\in\mathcal{T}_{N}} \Vert\nabla\left(\Delta u-\mathcal{Q}_{0} \Delta u\right)\Vert_{\partial T_{i}}^{2} \right)^{\frac{1}{2}} \3bar e_{N}\3bar\\
      &\leq C \varepsilon hH^{\frac{1}{2}} \left(N^{-(k-\frac{5}{2})} +\varepsilon^{-2} N^{-(k-\frac{5}{2})} \ln^{k-\frac{5}{2}}N +\varepsilon^{-2} N^{\frac{3}{2}-\alpha}\right) \3bar e_{N}\3bar\\
      &\leq C \left(\varepsilon N^{-(k-1)} +N^{-(k-1)} \ln^{k-\frac{3}{2}}N +N^{1-\alpha}\right) \3bar e_{N}\3bar.
  \end{split}
\end{equation}
Similarly, it follows from the Cauchy-Schwarz inequality and (\ref{2.2}) and (\ref{l3}) that
\begin{equation}\label{5.9}
  \begin{split}
      l_{3}\left(u,e_{N}\right)&=\sum_{T\in\mathcal{T}_{N}}\left\langle\left(\nabla u-\mathbf{Q}_{N}\nabla u\right)\cdot\mathbf{n},e_0-e_b\right\rangle_{\partial T}\\
      &\leq \sum_{i=1,2}\left(\sum_{T\in\mathcal{T}_{N}} H \Vert\nabla u-\mathbf{Q}_{N}\nabla u\Vert_{\partial T_{i}}^{2}\right)^{\frac{1}{2}}\left(\sum_{T\in\mathcal{T}_{N}} H^{-1} \Vert e_0-e_b\Vert_{\partial T_{i}}^{2}\right)^{\frac{1}{2}}\\
      &\leq C H^{\frac{1}{2}} \sum_{i=1,2} \left( \sum_{T\in\mathcal{T}_{N}} \Vert\nabla u-\mathbf{Q}_{N}\nabla u\Vert_{\partial T_{i}}^{2} \right)^{\frac{1}{2}} \3bar e_{N}\3bar\\
      &\leq C H^{\frac{1}{2}} \left(N^{-(k-\frac{1}{2})} +N^{-(k-\frac{1}{2})} \ln^{k-\frac{1}{2}}N +N^{\frac{1}{2}-\alpha}\right) \3bar e_{N}\3bar\\
      &\leq C \left(N^{-k} +N^{-k} \ln^{k-\frac{1}{2}}N +N^{-\alpha}\right) \3bar e_{N}\3bar.
  \end{split}
\end{equation}
In the same way, considering $s(\mathcal{Q}_{N}u,e_{N})$, it follows from the Cauchy-Schwarz inequality and (\ref{2.2}) and (\ref{l4})-(\ref{l5}) that
\begin{align}
  &\bigg\vert\sum_{i=1,2}\sum_{T\in\mathcal{T}_{N}}\varepsilon^{2} h^{-1}\left\langle\nabla \mathcal{Q}_{0}u-\mathbf{Q}_g\nabla u,\nabla e_0-\mathbf{e}_g\right\rangle_{\partial T_{i}}\bigg\vert \notag \\
  &\leq \sum_{i=1,2}\left(\sum_{T\in\mathcal{T}_{N}}\varepsilon^{2} h^{-1}\Vert\nabla \mathcal{Q}_{0}u-\nabla u\Vert_{\partial T_{i}}^{2}\right)^{\frac{1}{2}}\left(\sum_{T\in\mathcal{T}_{N}}\varepsilon^{2} h^{-1}\Vert \nabla e_0-\mathbf{e}_g\Vert_{\partial T_{i}}^{2}\right)^{\frac{1}{2}} \notag \\
  &\leq C \varepsilon h^{-\frac{1}{2}} \sum_{i=1,2} \left( \sum_{T\in\mathcal{T}_{N}} \Vert\nabla \mathcal{Q}_{0}u-\nabla u\Vert_{\partial T_{i}}^{2} \right)^{\frac{1}{2}} \3bar e_{N}\3bar \notag \\
  &\leq C \varepsilon h^{-\frac{1}{2}} \left(N^{-(k-\frac{1}{2})} +N^{-(k-\frac{1}{2})} \ln^{k-\frac{1}{2}}N +N^{\frac{3}{2}-\alpha}\right) \3bar e_{N}\3bar \notag \\
  &\leq C \left(\varepsilon^{\frac{1}{2}} N^{-(k-1)} +\varepsilon^{\frac{1}{2}} N^{-(k-1)} \ln^{k-1}N +\varepsilon^{\frac{1}{2}} N^{2-\alpha}\right) \3bar e_{N}\3bar, \label{5.10} \\
  &\bigg\vert\sum_{i=1,2}\sum_{T\in\mathcal{T}_{N}} \varepsilon^{2}h^{-2}H^{-1} \left\langle\mathcal{Q}_{0}u-\mathcal{Q}_{b}u,e_0-e_b\right\rangle_{\partial T_{i}}\bigg\vert \notag \\
  &\leq \sum_{i=1,2}\left(\sum_{T\in\mathcal{T}_{N}} \varepsilon^{2}h^{-2}H^{-1} \Vert \mathcal{Q}_{0} u -u \Vert_{\partial T_{i}}^{2}\right)^{\frac{1}{2}}\left(\sum_{T\in\mathcal{T}_{N}} \varepsilon^{2}h^{-2}H^{-1} \Vert e_0-e_b\Vert_{\partial T_{i}}^{2}\right)^{\frac{1}{2}} \notag \\
  &\leq C \varepsilon h^{-1}H^{-\frac{1}{2}} \sum_{i=1,2} \left(\sum_{T\in\mathcal{T}_{N}} \Vert \mathcal{Q}_{0} u -u \Vert_{\partial T_{i}}^{2} \right)^{\frac{1}{2}} \3bar e_{N}\3bar \notag \\
  &\leq C \varepsilon h^{-1}H^{-\frac{1}{2}} \left(N^{-(k+\frac{1}{2})} +\varepsilon N^{-(k+\frac{1}{2})} \ln^{k+\frac{1}{2}}N +\varepsilon N^{\frac{1}{2}-\alpha}\right) \notag \\
  &\leq C \left(N^{-(k-1)} +\varepsilon N^{-(k-1)} \ln^{k-\frac{1}{2}}N +\varepsilon N^{2-\alpha}\right), \label{5.11}
\end{align}
and
\begin{align}\label{5.12}
  &\bigg\vert\sum_{i=1,2}\sum_{T\in\mathcal{T}_{N}} H^{-1} \left\langle\mathcal{Q}_{0}u-\mathcal{Q}_{b}u,e_0-e_b\right\rangle_{\partial T_{i}}\bigg\vert \notag \\
  &\leq \sum_{i=1,2}\left(\sum_{T\in\mathcal{T}_{N}} H^{-1} \Vert \mathcal{Q}_{0}u-u\Vert_{\partial T_{i}}^{2}\right)^{\frac{1}{2}}\left(\sum_{T\in\mathcal{T}_{N}} H^{-1} \Vert e_0-e_b\Vert_{\partial T_{i}}^{2}\right)^{\frac{1}{2}} \notag \\
  &\leq C H^{-\frac{1}{2}} \sum_{i=1,2} \left( \sum_{T\in\mathcal{T}_{N}} \Vert \mathcal{Q}_{0} u -u \Vert_{\partial T_{i}}^{2} \right)^{\frac{1}{2}} \3bar e_{N}\3bar \notag \\
  &\leq C H^{-\frac{1}{2}} \left(N^{-(k+\frac{1}{2})} +\varepsilon N^{-(k+\frac{1}{2})} \ln^{k+\frac{1}{2}}N +\varepsilon N^{\frac{1}{2}-\alpha}\right) \notag \\
  &\leq C \left(N^{-k} +\varepsilon N^{-k} \ln^{k+\frac{1}{2}}N +\varepsilon N^{1-\alpha}\right).
\end{align}
Substituting (\ref{5.7})-(\ref{5.12}) into (\ref{5.6}) yields
\begin{align*}
  \3bar e_{N} \3bar^{2} \leq C \left( N^{-(k-1)} \ln^{k-\frac{3}{2}}N \right) \3bar e_{N} \3bar.
\end{align*}
which implies (\ref{5.5}). This completes the proof of the theorem.
\end{proof}

\section{Numerical Experiments}
In this section we compute two numerical examples where we select $\alpha=k+1$ for creating the Shishkin mesh. Consider the singularly perturbed fourth-order problem that seeks solution $u=u(x, y)$ satisfying
\begin{align*}
  \varepsilon ^{2}\Delta ^{2}u-\Delta u&=f, \quad \text{in}~\Omega,\\
  u=\partial_{\mathbf{n}} u&=0, \quad \text{on}~\partial\Omega,
\end{align*}
where $\Omega=(0, 1)^2$, $f$ and $\varepsilon$ will choose later. Tables \ref{Exam1}-\ref{Exam4} display the errors $\3bar \mathcal{Q}_N u- u_{N} \3bar$ for several different $\varepsilon$ and $N$.
\begin{example}\label{exam6.1}
  Choose $f(x, y)$ such that the exact solution is $u(x, y)=g(x)g(y)$, where
  $$g(x)=\frac{1}{2}\left[\sin(\pi x)+\frac{\pi \varepsilon}{1-e^{-1/ \varepsilon}}\left(e^{-x/ \varepsilon}+e^{(x-1)/ \varepsilon}-1-e^{-1/ \varepsilon}\right)\right].$$
  The numerical results are shown in Table \ref{Exam1}-\ref{Exam2}.
\end{example}

  In the case of $k=3$, the error and the order of convergence on Shishkin mesh and uniform mesh are listed in Table \ref{Exam1} and Table \ref{Exam1.1}, respectively. Compared with the numerical results of the WG method for the problem on uniform mesh, our results with Shishkin mesh are better and an $\varepsilon$-independent asymptotically optimal order of convergence is achieved in all cases. 
  \begin{table}[h!]
    \centering
    \caption{Numerical results for Example \ref{exam6.1} on Shishkin mesh.}
    \label{Exam1}
    \begin{tabular}{|c|c c c c c|}
    \hline
    $\varepsilon$ &$N=8$ &$N=16$& $N=32$   & $N=64$   & $N=128$  \\ \hline
    1e-00 & 1.01e-03 & 2.61e-04 & 6.58e-05 & 1.65e-05 & 4.12e-06 \\
          & 1.96     & 1.99     & 2.00     & 2.00     & --       \\ 
    1e-01 & 3.77e-03 & 1.06e-03 & 2.75e-04 & 6.94e-05 & 1.74e-05 \\
          & 1.83     & 1.95     & 1.99     & 2.00     & --       \\ 
    1e-02 & 1.17e-02 & 6.43e-03 & 3.03e-03 & 1.25e-03 & 4.59e-04 \\
          & 0.86     & 1.09     & 1.28     & 1.44     & --       \\ 
    1e-03 & 3.81e-03 & 2.08e-03 & 9.73e-04 & 4.00e-04 & 1.46e-04 \\
          & 0.87     & 1.10     & 1.28     & 1.45     & --       \\ 
    1e-04 & 1.22e-03 & 6.59e-04 & 3.08e-04 & 1.27e-04 & 4.64e-05 \\
          & 0.89     & 1.10     & 1.28     & 1.45     & --       \\
    1e-05 & 4.18e-04 & 2.09e-04 & 9.75e-05 & 4.01e-05 & 1.47e-05 \\
          & 1.00     & 1.10     & 1.28     & 1.45     & --       \\ 
    1e-06 & 2.09e-04 & 6.71e-05 & 3.09e-05 & 1.27e-05 & 4.64e-06 \\
          & 1.64     & 1.12     & 1.28     & 1.45     & --       \\
    1e-07 & 1.74e-04 & 2.44e-05 & 9.84e-06 & 4.01e-06 & 1.47e-06 \\
          & 2.84     & 1.31     & 1.29     & 1.45     & --       \\ \hline
    \end{tabular}
  \end{table}

  \begin{table}[h!]
    \centering
    \caption{Numerical results for Example \ref{exam6.1} on uniform mesh.}
    \label{Exam1.1}
    \begin{tabular}{|c|c c c c c|}
    \hline
      $\varepsilon$ &$N=8$ &$N=16$& $N=32$   & $N=64$   & $N=128$  \\ \hline
      1e-00 & 1.01e-03 & 2.61e-04 & 6.58e-05 & 1.65e-05 & 4.12e-06 \\
            & 1.96     & 1.99     & 2.00     & 2.00     & --       \\
      1e-01 & 3.77e-03 & 1.06e-03 & 2.75e-04 & 6.94e-05 & 1.74e-05 \\
            & 1.83     & 1.95     & 1.99     & 2.00     & --       \\
      1e-02 & 3.87e-02 & 2.06e-02 & 8.03e-03 & 2.62e-03 & 7.53e-04 \\
            & 0.91     & 1.36     & 1.62     & 1.80     & --       \\
      1e-03 & 1.24e-02 & 1.60e-02 & 1.73e-02 & 1.44e-02 & 8.63e-03 \\
            & -0.37    & -0.12    & 0.26     & 0.74     & --       \\
      1e-04 & 1.30e-03 & 1.83e-03 & 2.57e-03 & 3.56e-03 & 4.74e-03 \\
            & -0.49    & -0.49    & -0.47    & -0.41    & --       \\  
      1e-05 & 1.33e-04 & 1.85e-04 & 2.62e-04 & 3.70e-04 & 5.21e-04 \\
            & -0.47    & -0.50    & -0.50    & -0.49    & --       \\
      1e-06 & 1.96e-05 & 1.87e-05 & 2.62e-05 & 3.71e-05 & 5.25e-05 \\
            & 0.07     & -0.49    & -0.50    & -0.50    & --       \\
      1e-07 & 1.30e-05 & 2.24e-06 & 2.63e-06 & 3.71e-06 & 5.25e-06 \\
            & 2.54     & -0.23    & -0.50    & -0.50    & --       \\ \hline
        \end{tabular}
      \end{table}
  In the case of $k=4$, the numerical results are shown in Table \ref{Exam2}, which has yielded the asymptotically optimal order.
  \begin{table}[h!]
    \centering
    \caption{Numerical results for Example \ref{exam6.1} on Shishkin mesh.}
    \label{Exam2}
    \begin{tabular}{|c|c c c c|}
    \hline
    $\varepsilon$ &$N=8$ &$N=16$& $N=32$   & $N=64$   \\ \hline
    1e-00 & 3.07e-05 & 3.90e-06 & 4.89e-07 & 6.12e-08 \\
          & 2.98     & 3.00     & 3.00     & --       \\
    1e-01 & 3.92e-04 & 5.35e-05 & 6.84e-06 & 8.61e-07 \\
          & 2.87     & 2.97     & 2.99     & --       \\ 
    1e-02 & 6.08e-03 & 2.56e-03 & 8.25e-04 & 2.11e-04 \\
          & 1.25     & 1.63     & 1.97     & --       \\ 
    1e-03 & 1.98e-03 & 8.29e-04 & 2.66e-04 & 6.77e-05 \\
          & 1.26     & 1.64     & 1.97     & --       \\ 
    1e-04 & 6.28e-04 & 2.63e-04 & 8.43e-05 & 2.15e-05 \\
          & 1.26     & 1.64     & 1.97     & --       \\
    1e-05 & 1.99e-04 & 8.32e-05 & 2.67e-05 & 6.79e-06 \\
          & 1.26     & 1.64     & 1.97     & --       \\
    1e-06 & 6.33e-05 & 2.63e-05 & 8.44e-06 & 2.24e-06 \\
          & 1.27     & 1.64     & 1.91     & --       \\ \hline
    \end{tabular}
    \end{table}

\begin{example}\label{exam6.2}
  Let $g(x)$ be as in Example \ref{exam6.1} and set
  $$p(y)=2y(1-y^{2})+\varepsilon\left[ld(1-2y)-3\frac{q}{l}+\left(\frac{3}{l}-d\right)e^{-y/\varepsilon}+\left(\frac{3}{l}+d\right)e^{(y-1)/ \varepsilon}\right],$$
  with $l = 1-e^{-1/ \varepsilon}$, $q=2-l$ and $d=1/(q-2\varepsilon l)$. Then choose $f(x, y)$ such that the exact solution of (\ref{1.1}) is $u(x, y)=g(x)p(y)$. 
  The numerical results are shown in Table \ref{Exam3}-\ref{Exam4}.
\end{example}

The error and the order of convergence on Shishkin mesh and uniform mesh with $k=3$ are listed in Table \ref{Exam3} and Table \ref{Exam3.1}, respectively. And also we get better results on Shishkin mesh than uniform mesh.
  \begin{table}[h!]
    \centering
    \caption{Numerical results for Example \ref{exam6.2} on Shishkin mesh.}
    \label{Exam3}
    \begin{tabular}{|c|c c c c c|}
    \hline
    $\varepsilon$ &$N=8$ &$N=16$& $N=32$   & $N=64$   & $N=128$  \\ \hline
    1e-00 & 1.66e-04 & 4.24e-05 & 1.07e-05 & 2.67e-06 & 6.67e-07 \\
          & 1.97     & 1.99     & 2.00     & 2.00     & --       \\ 
    1e-01 & 6.48e-03 & 1.82e-03 & 4.72e-04 & 1.19e-04 & 2.99e-05 \\
          & 1.83     & 1.94     & 1.99     & 2.00     & --       \\ 
    1e-02 & 2.10e-02 & 1.16e-02 & 5.44e-03 & 2.24e-03 & 8.25e-04 \\
          & 0.86     & 1.09     & 1.28     & 1.44     & --       \\ 
    1e-03 & 6.86e-03 & 3.74e-03 & 1.75e-03 & 7.20e-04 & 2.64e-04 \\
          & 0.87     & 1.10     & 1.28     & 1.45     & --       \\ 
    1e-04 & 2.18e-03 & 1.19e-03 & 5.55e-04 & 2.28e-04 & 8.35e-05 \\
          & 0.88     & 1.10     & 1.28     & 1.45     & --       \\
    1e-05 & 7.13e-04 & 3.76e-04 & 1.76e-04 & 7.22e-05 & 2.64e-05 \\
          & 0.92     & 1.10     & 1.28     & 1.45     & --       \\ 
    1e-06 & 2.87e-04 & 1.20e-04 & 5.56e-05 & 2.28e-05 & 8.36e-06 \\
          & 1.27     & 1.11     & 1.28     & 1.45     & --       \\
    1e-07 & 2.00e-04 & 4.01e-05 & 1.76e-05 & 7.29e-06 & 2.64e-06 \\
          & 2.32     & 1.19     & 1.27     & 1.46     & --       \\ \hline
    \end{tabular}
    \end{table}

    \begin{table}[h!]
      \centering
      \caption{Numerical results for Example \ref{exam6.2} on uniform mesh.}
      \label{Exam3.1}
      \begin{tabular}{|c|c c c c c|}
      \hline
        $\varepsilon$ &$N=8$ &$N=16$& $N=32$   & $N=64$   & $N=128$  \\ \hline
        1e-00 & 1.66E-04 & 4.24E-05 & 1.07E-05 & 2.67E-06 & 6.67E-07 \\
              & 1.97     & 1.99     & 2.00     & 2.00     & --       \\
        1e-01 & 6.48E-03 & 1.82E-03 & 4.72E-04 & 1.19E-04 & 2.99E-05 \\
              & 1.83     & 1.94     & 1.99     & 2.00     & --       \\
        1e-02 & 6.96E-02 & 3.70E-02 & 1.44E-02 & 4.71E-03 & 1.35E-03 \\
              & 0.91     & 1.36     & 1.62     & 1.80     & --       \\
        1e-03 & 2.22E-02 & 2.88E-02 & 3.12E-02 & 2.60E-02 & 1.56E-02 \\
              & -0.37    & -0.12    & 0.26     & 0.74     & --       \\
        1e-04 & 2.35E-03 & 3.30E-03 & 4.63E-03 & 6.41E-03 & 8.54E-03 \\
              & -0.49    & -0.49    & -0.47    & -0.41    & --       \\  
        1e-05 & 2.37E-04 & 3.34E-04 & 4.72E-04 & 6.66E-04 & 9.38E-04 \\
              & -0.49    & -0.50    & -0.50    & -0.49    & --       \\
        1e-06 & 2.85E-05 & 3.35E-05 & 4.73E-05 & 6.69E-05 & 9.45E-05 \\
              & -0.23    & -0.50    & -0.50    & -0.50    & --       \\
        1e-07 & 1.45E-05 & 3.62E-06 & 4.73E-06 & 6.69E-06 & 9.46E-06 \\
              & 2.00     & -0.39    & -0.50    & -0.50    & --       \\ \hline
        \end{tabular}
      \end{table}

    In the case of $k=4$, the numerical results are shown in Table \ref{Exam4}, which has also yielded the asymptotically optimal order, independent of $\varepsilon$.
    \begin{table}[h!]
      \centering
      \caption{Numerical results for Example \ref{exam6.2} on Shishkin mesh.}
      \label{Exam4}
      \begin{tabular}{|c|c c c c|}
      \hline
      $\varepsilon$ &$N=8$ &$N=16$& $N=32$   & $N=64$   \\ \hline
      1e-00 & 3.84e-06 & 4.86e-07 & 6.09e-08 & 7.62e-09 \\
            & 2.98     & 3.00     & 3.00     & --       \\
      1e-01 & 6.93e-04 & 9.45e-05 & 1.21e-05 & 1.52e-06 \\
            & 2.87     & 2.97     & 2.99     & --       \\ 
      1e-02 & 1.09e-02 & 4.60e-03 & 1.48e-03 & 3.79e-04 \\
            & 1.25     & 1.63     & 1.97     & --       \\ 
      1e-03 & 3.57e-03 & 1.49e-03 & 4.79e-04 & 1.22e-04 \\
            & 1.26     & 1.64     & 1.97     & --       \\
      1e-04 & 1.13e-03 & 4.74e-04 & 1.52e-04 & 3.87e-05 \\
            & 1.26     & 1.64     & 1.97     & --       \\
      1e-05 & 3.58e-04 & 1.50e-04 & 4.81e-05 & 1.22e-05 \\
            & 1.26     & 1.64     & 1.97     & --       \\
      1e-06 & 1.14e-04 & 4.74e-05 & 2.53e-05 & 3.90e-06 \\
            & 1.26     & 0.90     & 2.70     & --       \\ \hline
      \end{tabular}
      \end{table}

\begin{remark}
	The numerical examples show that we obtain the asymptotically optimal error estimate. In fact, the $\ln ^{k-\frac{3}{2}}N$ in the error results will have an impact on the convergence order, and with the refinement of the Shishkin mesh, the influence of $\ln ^{k-\frac{3}{2}}N$ on the convergence order will gradually become smaller. The convergence order is reduced by about half order when $k=3$ and by one order in the case of $k=4$. Our numerical results also confirm this well.
\end{remark}

\section{Conclusion}
In this paper, we use the weak Galerkin finite element method to solve the singularly perturbed fourth-order boundary value problem in 2D domain. By constructing the Shishkin mesh which is suitable for the problem, we give the corresponding numerical algorithm and the error estimate in the $H^2$ discrete norm. Compared with solving the problem on the uniform mesh, the WG method obtains better results on the Shishkin mesh, which is verified by the numerical results. Moreover, the results of our numerical experiments are consistent with the error estimation theory, and the asymptotically optimal convergence order is obtained.

\section*{Statements and Declarations}

\smallskip
\noindent
\textbf{Funding}.
This work was supported by the National Natural Science Foundation of China (Grant No. 12101039, 12271208).

\smallskip
\noindent
\textbf{Data Availability}.
The code used in this work will be made available upon request to the authors.

\smallskip
\noindent
\textbf{Competing Interests}.
The authors have no relevant financial or non-financial interests to disclose.


\newpage

\end{document}